\newcommand{\ud}[0]{\,\mathrm{d}}
\newcommand{\dist}[0]{\operatorname{dist}}
\newcommand{\abs}[1]{|#1|}
\newcommand{\Babs}[1]{\Big|#1\Big|}
\newcommand{\Norm}[2]{\|#1\|_{#2}}
\newcommand{\BNorm}[2]{\Big\|#1\Big\|_{#2}}
\newcommand{\pair}[2]{\langle #1,#2 \rangle}
\newcommand{\Bpair}[2]{\Big\langle #1,#2 \Big\rangle}
\newcommand{\ave}[1]{\langle #1\rangle}
\newcommand{\bddlin}[0]{\mathscr{L}}
\newcommand{\BMO}[0]{\operatorname{BMO}}
\newcommand{\supp}[0]{\operatorname{supp}}
\newcommand{\loc}[0]{\operatorname{loc}}
\newcommand{\R}{\mathbb{R}}
\newcommand{\C}{\mathbb{C}}
\newcommand{\N}{\mathbb{N}}
\newcommand{\Z}{\mathbb{Z}}
\newcommand{\prob}[0]{\mathbb{P}}
\newcommand{\Exp}[0]{\mathbb{E}}
\newcommand{\D}[0]{\mathbb{D}}
\newcommand{\eps}[0]{\varepsilon}
\newcommand{\good}[0]{\operatorname{good}}
\newcommand{\bad}[0]{\operatorname{bad}}
\newcommand{\children}[0]{\operatorname{ch}}
\newcommand{\out}[0]{\operatorname{out}}
\newcommand{\inside}[0]{\operatorname{in}}
\newcommand{\near}[0]{\operatorname{near}}
\def\cyr{\fontencoding{OT2}\fontfamily{wncyr}\selectfont}
\DeclareTextFontCommand{\textcyr}{\cyr}
\newcommand{\sha}[0]{\textup{\textcyr{SH}}}
\swapnumbers \numberwithin{equation}{section}
\theoremstyle{plain}
\newtheorem{theorem}[equation]{Theorem}
\newtheorem{proposition}[equation]{Proposition}
\newtheorem{lemma}[equation]{Lemma}
\theoremstyle{definition}
\newtheorem{definition}[equation]{Definition}
\theoremstyle{remark}
\begin{document}

\title[The sharp weighted bound]{The sharp weighted bound for general Calder\'on--Zygmund operators} 

\author[T.~P.\ Hyt\"onen]{Tuomas P.\ Hyt\"onen}
\address{Department of Mathematics and Statistics, P.O.B.~68 (Gustaf H\"all\-str\"omin katu~2b), FI-00014 University of Helsinki, Finland}
\email{tuomas.hytonen@helsinki.fi}

\date{\today}

\keywords{$A_2$ conjecture, $T(1)$ theorem, dyadic shift}
\subjclass[2010]{42B20, 42B25}


\maketitle

\begin{abstract}
For a general Calder\'on--Zygmund operator $T$ on $\R^N$, it is shown that 
\begin{equation*}
   \Norm{Tf}{L^2(w)}\leq C(T)\cdot\sup_Q\Big(\fint_Q w\cdot\fint_Q w^{-1}\Big)\cdot\Norm{f}{L^2(w)}
\end{equation*}
for all Muckenhoupt weights $w\in A_2$. This optimal estimate was known as the $A_2$ conjecture. 
A recent result of Perez--Treil--Volberg reduced the problem to a testing condition on indicator functions, which is verified in this paper.

The proof consists of the following elements: (i) a variant of the Nazarov--Treil--Volberg method of random dyadic systems with just one random system and completely without ``bad'' parts; (ii) a resulting representation of a general Calder\'on--Zygmund operator as an average of ``dyadic shifts''; and (iii) improvements of the Lacey--Petermichl--Reguera estimates for these dyadic shifts, which allow summing up the series in the obtained representation.
\end{abstract}

\section{Introduction}

Let $T\in\bddlin(L^2(\R^N))$ be a fixed Calder\'on--Zygmund operator, i.e., one with the integral representation
\begin{equation*}
  Tf(x)=\int_{\R^N}K(x,y)f(y)\ud y,\qquad x\notin\supp f,
\end{equation*}
for a kernel $K(x,y)$, defined for all $x\neq y$ on $\R^N\times\R^N$, and verifying the standard estimates $\displaystyle\abs{K(x,y)}\leq\frac{C}{\abs{x-y}^N}$ and
\begin{align*}
  \abs{K(x+h,y)-K(x,y)}+\abs{K(x,y+h)-K(x,y)}\leq \frac{C\abs{h}^{\alpha}}{\abs{x-y}^{N+\alpha}}
\end{align*}
for all $\abs{x-y}>2\abs{h}>0$ and some fixed $\alpha\in(0,1]$.  Let $w\in L^1_{\loc}(\R^N)$ be positive almost everywhere. It is classical that the Muckenhoupt condition
\begin{equation*}
  \Norm{w}{A_2}:=\sup_Q\fint_Q w\ud x\cdot\fint_Q w^{-1}\ud x <\infty,
\end{equation*}
where the supremum is taken over all cubes $Q\subset\R^N$, is both sufficient for the boundedness of all such $T$ on $L^2(w)$, and necessary for the $L^2(w)$-boundedness of some particular operators $T$, like the Hilbert transform for $N=1$.

Recently, the precise dependence of the $\bddlin(L^2(w))$ norm of Calder\'on--Zygmund operators on the Muckenhoupt characteristic $\Norm{w}{A_2}$ has attracted interest, and the following bound, optimal in general, has become known as the $A_2$ conjecture:
\begin{equation}\label{eq:A2conj}
  \Norm{Tf}{L^2(w)}\leq C(T)\,\Norm{w}{A_2}\Norm{f}{L^2(w)}.
\end{equation}
By the sharp form of Rubio de Francia's extrapolation theorem due to Dragi\v{c}evi\'c, Grafakos, Pereyra and Petermichl \cite{DGPP}, this implies the corresponding weighted $L^p$ bound,
\begin{equation}\label{eq:Apconj}
  \Norm{Tf}{L^p(w)}\leq C_p(T)\,\Norm{w}{A_p}^{\max\{1,1/(p-1)\}}\Norm{f}{L^p(w)},\qquad p\in(1,\infty),
\end{equation}
where
\begin{equation*}
   \Norm{w}{A_p}:=\sup_Q\fint_Q w\ud x\cdot\Big(\fint_Q w^{-1/(p-1)}\ud x\Big)^{p-1}.
\end{equation*}

Here is a brief description of past progress on this problem. It concentrates on the research on Calder\'on--Zygmund-type operators, for which the conjectured sharp bounds are given by \eqref{eq:A2conj} and \eqref{eq:Apconj}, but many other kinds of operators, sometimes with different dependence on the weight, have also been considered in the literature.
\begin{enumerate}
  \item Although not strictly a Calder\'on--Zygmund operator, the Hardy--Littlewood maximal operator $M$ is clearly closely related, and the sharp weighted line of research was opened by Buckley \cite{Buckley}, who proved \eqref{eq:A2conj} for $T=M$. (For $M$, the right power of $\Norm{w}{A_p}$ in \eqref{eq:Apconj} is $1/(p-1)$ for all $p\in(1,\infty)$.)
  
  \item Resolving a conjecture by Astala--Iwaniec--Saksman \cite[Eq.~(45)]{AIS} with implications to Beltrami equations, the case of the Beurling--Ahlfors transform $B\in\bddlin(L^2(\C))$ was first settled by Petermichl and Volberg \cite{PV}, and with an alternative proof by Dragi\v{c}evi\'c and Volberg \cite{DV}. Petermichl also obtained the sharp bounds for the Hilbert transform $H\in\bddlin(L^2(\R))$ \cite{Petermichl:Hilbert}, and then for the Riesz transforms $R_i\in\bddlin(L^2(\R^N))$ in arbitrary dimension $N\in\Z_+$~\cite{Petermichl:Riesz}. All these results relied on ad hoc representations based on specific symmetries of the operators in question, and Bellman function arguments tailor-made for each particular situation.
  
  \item A unified approach to the earlier results for $B$, $H$ and $R_i$ was found by Lacey, Petermichl and Reguera \cite{LPR}, who proved \eqref{eq:A2conj} for a general class of ``dyadic shifts'', from which all the mentioned operators may be obtained by suitable averaging. The original proof employed a two-weight inequality for dyadic shifts due to Nazarov, Treil and Volberg \cite{NTV:2weightHaar}. It was substantially simplified by Cruz-Uribe, Martell and P\'erez \cite{CMP}, based on a remarkable formula of Lerner \cite{Lerner:formula}, which gives very precise and useful information on a function in terms of its local mean oscillations.
  
  \item  Vagharshakyan \cite{Vagharshakyan} found a way of recovering all sufficiently smooth, odd, convolution-type Calder\'on--Zygmund operators in dimension $N=1$ from dyadic shifts, thereby proving \eqref{eq:A2conj} for all these operators. By a different method, Lerner \cite{Lerner:Ap} was able to estimate all  standard convolution-type operators in arbitrary dimension by controlling them in terms of Wilson's intrinsic square function \cite{Wilson}; however, this approach only gave  \eqref{eq:Apconj} for $p\in(1,\tfrac32]\cup[3,\infty)$.
  
  \item The conjecture \eqref{eq:A2conj} concerning a strong-type bound was reduced to proving the corresponding weak-type estimate (and even slightly less) by P\'erez, Treil and Volberg \cite{PTV}. Based on this reduction, the first confirmation of \eqref{eq:A2conj} for a general class of non-convolution operators,  but imposing heavy smoothness requirements on the kernels, was obtained by Lacey, Reguera, Sawyer, Uriarte-Tuero, Vagharshakyan and the author \cite{HLRSUV}.
\end{enumerate}

Altogether, the $A_2$ conjecture has now been verified in quite a number of cases. (Note that no two of the just mentioned results of Vagharshakyan \cite{Vagharshakyan}, Lerner \cite{Lerner:Ap}, and Lacey et al. \cite{HLRSUV} are strictly comparable.) And in this paper, the problem is completely solved. Besides, the proof is based on quite general metric--measure-theoretic objects (as opposed to the use of convolutions and regular wavelets in the preceding contributions), which makes it likely to extend to further situations like spaces of homogeneous type; see the discussion at the end of the paper.

\begin{theorem}
The estimate \eqref{eq:A2conj}, and hence \eqref{eq:Apconj}, holds for all Calder\'on--Zygmund operators $T\in\bddlin(L^2(\R^N))$, for all $N\in\Z_+$.
\end{theorem}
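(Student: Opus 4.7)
The strategy has three layers, already indicated in the introduction. First, invoke the P\'erez--Treil--Volberg reduction \cite{PTV}: to establish \eqref{eq:A2conj} it suffices to prove a testing-type estimate
\begin{equation*}
  \Bigg(\int_Q\abs{T(\mathbf 1_Q w^{-1})}^2\,w\ud x\Bigg)^{1/2}\lesssim \Norm{w}{A_2}\Bigg(\int_Q w^{-1}\ud x\Bigg)^{1/2}
\end{equation*}
on indicators of arbitrary cubes $Q$, together with the dual version for $T^*$ with $w$ and $w^{-1}$ interchanged. Thus the entire analysis is localized to a single cube and a single indicator function.

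Second, I will build a probabilistic representation of $T$ as an average of dyadic shifts in the spirit of Nazarov--Treil--Volberg, but with the simplification, made possible by the testing formulation, of using only one random grid $\mathcal D=\mathcal D(\omega)$ and no ``bad cube'' decomposition. Concretely, with Haar basis $(h_Q)_{Q\in\mathcal D(\omega)}$, I expand
\begin{equation*}
  \pair{Tf}{g}=\sum_{Q,R\in\mathcal D(\omega)}\pair{T h_Q}{h_R}\pair{f}{h_Q}\pair{g}{h_R},
\end{equation*}
and regroup pairs $(Q,R)$ by their relative size $i$ and their relative nesting depth $j$. The standard kernel estimates together with the mean-zero property of Haar functions, after averaging over $\omega$, will force $\pair{T h_Q}{h_R}$ to be small whenever one of the cubes is much smaller than, or lies close to but outside, the other. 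The outcome is an exact identity
\begin{equation*}
  T=\Exp_\omega\sum_{i,j=0}^\infty \tau_{ij}\,S^{\mathcal D(\omega)}_{ij},
\end{equation*}
where each $S^{\mathcal D}_{ij}$ is a normalized dyadic shift of complexity $(i,j)$ and the weights $\tau_{ij}$ decay geometrically in $\max(i,j)$, at a rate governed by the H\"older exponent $\alpha$.

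Third, I must establish the testing condition of step (i) for every individual shift $S^{\mathcal D}_{ij}$ with a constant of at most polynomial growth in $(i,j)$. Here I refine the Lacey--Petermichl--Reguera bound \cite{LPR} in the form streamlined by Cruz-Uribe--Martell--P\'erez \cite{CMP}, which relies on Lerner's local mean oscillation decomposition \cite{Lerner:formula}. Previous uses of this machinery addressed shifts of fixed, bounded complexity; rerunning the argument while carefully tracking the dependence of each stopping-time argument, each Carleson-type embedding, and each maximal function estimate on $i,j$ should yield a bound of the form $\lesssim (1+i+j)^\kappa\Norm{w}{A_2}$ for some absolute $\kappa$. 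Combined with the geometric decay of $\tau_{ij}$, the series
\begin{equation*}
  \sum_{i,j=0}^\infty\abs{\tau_{ij}}\,(1+i+j)^\kappa
\end{equation*}
converges, proving \eqref{eq:A2conj}. The $L^p$ bound \eqref{eq:Apconj} is then immediate from sharp extrapolation \cite{DGPP}.

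The main obstacle is step (iii): earlier proofs could afford to hide the dependence on the shift complexity in implicit constants, but in the present setting arbitrarily long shifts arise and such hidden dependence would destroy the convergence of the series. A careful audit of the LPR/CMP proof, and possibly a new quantitative lemma controlling a maximal truncation of shifts, will be needed. Step (ii) is conceptually cleaner but has its own subtlety: abandoning the good--bad splitting means that pairs in which a small cube sits near the boundary of a larger one must be handled by taking expectation in $\omega$, replacing what used to be a pointwise deletion argument by a purely probabilistic one.
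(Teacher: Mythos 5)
Your three-layer architecture matches the paper's proof, and steps (i) and (ii) are essentially the intended route, but step (iii) contains a genuine gap. You propose to obtain the testing bound for individual shifts by ``refining the Lacey--Petermichl--Reguera bound in the form streamlined by Cruz-Uribe--Martell--P\'erez, which relies on Lerner's local mean oscillation decomposition.'' The paper explicitly reports that this did \emph{not} work: the Lerner-formula route could not be made to yield summable dependence on the shift parameters, and the final argument instead returns to the \emph{original} Lacey--Petermichl--Reguera scheme (corona/stopping-cube decomposition plus a John--Nirenberg-type exponential distribution estimate). The key quantitative input you would still be missing is Proposition~5.1 of the paper: a Calder\'on--Zygmund-decomposition argument showing that a bounded dyadic shift of complexity $(u,v)$ maps $L^1\to L^{1,\infty}$ with norm $O(u)$, linear rather than exponential in the complexity. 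That linear weak-type bound is what powers the John--Nirenberg iteration and produces the final factor $2^{\max(u,v)\gamma N/2}\,uv\,\Norm{w}{A_2}$ in the testing estimate. Note also that this factor is not polynomial in the complexity, contrary to your $(1+i+j)^{\kappa}$ ansatz: there is a residual exponential $2^{\max(u,v)\gamma N/2}$ (coming from the non-dyadic test cube $Q$), which is summable against $2^{-\max(u,v)\alpha/2}$ only because the goodness parameter is chosen so that $\alpha/2-\gamma N/2>0$.

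Two smaller points. First, your phrasing ``no bad cube decomposition'' understates what actually happens: the good/bad dichotomy is retained, and the theorem proved is that the \emph{expectation} of the bad part vanishes identically, so the resulting representation uses only good cubes and the dyadic shifts inherit a ``goodness'' property ($\dist(I,\partial K)\gtrsim\ell(I)^{\gamma}\ell(K)^{1-\gamma}$). This goodness is not cosmetic --- it is used concretely in Lemma~7.2, where one needs $\hat K\cap Q\neq\varnothing$ to force $|K\cap Q|\gtrsim 2^{-\max(u,v)\gamma N}|K|$. Second, the P\'erez--Treil--Volberg testing condition you quote has the integral $\int_Q$ restricted to $Q$; the actual condition is $\Norm{T(w1_Q)}{L^2(\sigma)}$ over all of $\R^N$, so you would also need to handle the large-scale part ($\ell(K)\geq\ell(Q)$), which the paper dispatches separately via the pointwise bound $\lesssim w(Q)|Q|^{-1}1_{3Q}+M(w1_Q)1_{(3Q)^c}$ and Buckley's maximal function estimate.
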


Just like the recent result of Lacey et al. \cite{HLRSUV}, the proof relies on the reduction of P\'erez, Treil and Volberg \cite{PTV}. For an arbitrary Calder\'on--Zygmund operator $T$, they proved that
\begin{align*}
  \Norm{T}{\bddlin(L^2(w))}
  &\leq C(T)\Big(\Norm{w}{A_2}+\sup_Q\frac{1}{w(Q)^{1/2}}\Norm{T^*(w1_Q)}{L^2(w^{-1})} \\
  &\phantom{\leq C(T)\Big(\Norm{w}{A_2}}+\sup_Q\frac{1}{w^{-1}(Q)^{1/2}}\Norm{T(w^{-1}1_Q)}{L^2(w)}\Big) \\
  &\leq C(T)\Big(\Norm{w}{A_2}+\Norm{T}{\bddlin(L^2(w),L^{2,\infty}(w))} +\Norm{T^*}{\bddlin(L^2(w^{-1}),L^{2,\infty}(w^{-1}))}\Big),
\end{align*}
where $w(Q):=\int_Q w\ud x$ and similarly with $w^{-1}$, and $T^*$ is the adjoint with respect to the unweighted $L^2$ duality. Thanks to the symmetry of $T$ and $T^*$ (both satisfy the same Calder\'on--Zygmund bounds), as well as of $w$ and $w^{-1}$ (both have the same $A_2$ characteristic), the first P\'erez--Treil--Volberg estimate above reduces the proof of the $A_2$ conjecture to showing that
\begin{equation}\label{eq:mainToShow}
   \Norm{T(w1_Q)}{L^2(w^{-1})}\leq C(T)\,\Norm{w}{A_2}\,w(Q)^{1/2}
\end{equation}
for all Calder\'on--Zygmund operators $T$. (The second P\'erez--Treil--Volberg estimate will not be used here; it is only recorded for the sake of pointing out the connection to weak-type bounds.)

This paper is concerned with the proof of \eqref{eq:mainToShow}. The Calder\'on--Zygmund operator $T$ will first be decomposed in terms of appropriate simpler operators. This was also the strategy of Lacey et al. \cite{HLRSUV}, where the decomposition was extracted from the proofs of the $T(1)$ theorems due to Beylkin--Coifman--Rokhlin \cite{BCR}, Figiel~\cite{Figiel}, and Xiang~\cite{Xiang}. However, the mentioned decomposition seems not to have been optimal for the $A_2$ conjecture, as summing up the weighted estimates for the simple operators required a high degree of smoothness on the kernel of $T$.

Thus, the first intermediate goal here is finding a better decomposition. And this is once again provided by the proof of a $T(1)$ theorem --- this time, the one for nonhomogeneous spaces due to Nazarov, Treil and Volberg \cite{NTV:Tb}. (A variant of the same proof, from a more recent Nazarov--Treil--Volberg preprint \cite{NTV:2weightHilbert}, is also behind the reduction of P\'erez, Treil and Volberg \cite{PTV}.) Recall that the basic philosophy of this proof is expanding an operator in terms of the Haar basis associated to a randomly chosen system of dyadic cubes; the part of the expansion living on so called ``good'' cubes can be directly estimated, and the remaining ``bad'' part can be forced to be an arbitrarily small fraction of the full operator norm. Thus the bound will be of the form
\begin{equation*}
  \Norm{T}{}\leq C_{\good}(r)+\eps_{\bad}(r)\Norm{T}{},
\end{equation*}
where $r$ is an adjustable parameter  in the definition of good and bad cubes; increasing $r$ will increase $C_{\good}(r)$ and decrease $\eps_{\bad}(r)$, and it suffices, in principle, to make $\eps_{\bad}(r)<1$. The problem is that, in the weighted case, the required size of $r$ would seem to depend on $w$. So even if one could prove the desired dependence $C_{\good}(r)\leq c(r)\Norm{w}{A_2}$ with $c(r)$ independent of $w$, this could be spoiled by the necessity of taking $r=r(w)$.

The solution to this problem is proving that, on average, the bad part becomes not only small but vanishing; in other words, a decomposition of an operator $T$ can be made by using Haar functions on good cubes only, with no error term whatsoever (Theorem~\ref{thm:goodMds}). This is an abstract result with no specific connection to weighted inequalities, and it will possible make the Nazarov--Treil--Volberg method of random dyadic systems more flexibly applicable to further questions. Another modification of the original randomisation argument is the use of only one random dyadic system, rather than two independent copies. In this way, there will be a stronger dyadic structure around, which is certainly a convenience, if not a necessity, for the subsequent considerations.

Once the full reduction to good cubes is available, the proof proceeds along the lines of the analysis of the good part in the Nazarov--Treil--Volberg $T(1)$ theorem \cite{NTV:Tb}, to extract several subseries of the Haar expansion, which are identified as new operators on their own right. These auxiliary operators are already implicit in the original Nazarov--Treil--Volberg argument \cite{NTV:Tb}, and their more explicit form was identified in my extension of their result to the vector-valued situation \cite{Hytonen:nonhomog}, where this explicit structure became more decisive. Here, it will be checked that these new operators are precisely the dyadic shifts in the generality defined by Lacey, Petermichl and Reguera \cite{LPR}. Thus, closing the circle with the pioneering sharp estimates for the classical integral transforms, it is proven here that all Calder\'on--Zygmund operators may be written as averages of dyadic shifts (Theorem~\ref{thm:repShifts}). In fact, and this technical issue will be important for the final steps of the proof, one only needs so called good dyadic shifts, where this goodness is closely related to the goodness of dyadic cubes.

The final task, then, is proving a version of the estimate \eqref{eq:mainToShow} for the good dyadic shifts in place of $T$. For individual shifts, this estimate has been established by Lacey--Petermichl--Reguera \cite{LPR}, with a simplified proof by Cruz-Uribe--Martell--P\'erez \cite{CMP}; however, their arguments give a dependence on certain parameters of the shift, which grows too rapidly to allow summing up the estimates in the series representation of $T$ in terms of these shifts. Appropriate improvements of these bounds will be established in the final part of the paper (Theorem~\ref{thm:mainEst}). Despite the elegance of the Cruz-Uribe--Martell--P\'erez argument \cite{CMP}, I did not manage to modify it for the required sharpness, and the new estimates will follow instead the general outline of the original Lacey--Petermichl--Reguera proof \cite{LPR}.

\subsection*{Acknowledgments}
I am indebted to Michael Lacey for getting me involved in the sharp weighted inequalities through my participation in the joint efforts \cite{HLRV,HLRSUV}.
The financial support of the Academy of Finland through projects 130166, 133264 and 218418 is gratefully acknowledged.

The major part of this research was carried out on the island of Koivula (N $59^{\circ} 52.922$, E $23^{\circ} 28.625$) in the Finnish archipelago. I would like to thank my parents for their kind hospitality.

\section{Preliminaries}

\subsection{Systems of dyadic cubes}
The standard dyadic system is
\begin{equation*}
  \mathscr{D}^{0}:=\bigcup_{k\in\Z}\mathscr{D}^{0}_k,\qquad
  \mathscr{D}^{0}_k:=\big\{ 2^{-k}\big([0,1)^N+m\big):m\in\Z^N\big\}.
\end{equation*}
For $I\in\mathscr{D}_k^0$ and a binary sequence $\beta=(\beta_j)_{j=-\infty}^{\infty}\in(\{0,1\}^N)^{\Z}$, let 
\begin{equation*}
  I\dot+\beta:=I+\sum_{j>k}\beta_j 2^{-j}.
\end{equation*}
Following Nazarov, Treil and Volberg \cite[Section 9.1]{NTV:Tb}, I will consider general dyadic systems of the form
\begin{equation*}
  \mathscr{D}=\mathscr{D}^{\beta}:=\{I\dot+\beta:I\in\mathscr{D}^0\}
  =\bigcup_{k\in\Z}\mathscr{D}^{\beta}_k.
\end{equation*}
Given a cube $I=x+[0,\ell)^N$, let
\begin{equation*}
   \children(I):=\{x+\eta\ell/2+[0,\ell/2)^N:\eta\in\{0,1\}^N\} 
\end{equation*}
denote the collection of dyadic children of $I$. Thus $\mathscr{D}^{\beta}_{k+1}=\bigcup\{\children(I):I\in\mathscr{D}^{\beta}_k\}$.

\subsection{Conditional expectations}
The local conditional expectation operators and their differences are denoted by
\begin{equation*}
  \Exp_I f:=1_I\ave{f}_I:= 1_I\fint_I f\ud x:= 1_I\frac{1}{\abs{I}}\int_I f\ud x,\qquad
  \D_I f:=\sum_{I'\in\children(I)}\Exp_{I'}f-\Exp_I f,
\end{equation*}
and then
\begin{equation*}
  \Exp_k^{\beta}f:=\sum_{I\in\mathscr{D}^{\beta}_k}\Exp_I f,\qquad
  \D_k^{\beta}f:=\sum_{I\in\mathscr{D}^{\beta}_k}\D_I f=\Exp_{k+1}^{\beta}f-\Exp_k^{\beta}f.
\end{equation*}
Often, the parameter $\beta$ will be understood from the context, and the superscript $\beta$ dropped from this notation.

For $f\in L^1_{\loc}(\R^N)$, Lebesgue's differentiation (or martingale convergence) theorem asserts that $\Exp_k f\to f$ almost everywhere, as $k\to\infty$. Since the $\Exp_k f$ are dominated by the Hardy--Littlewood maximal function $Mf$, this convergence also takes place in $L^2(w)$, as soon as $f\in L^2(w)$ and $w\in A_2$. This leads to the martingale difference decomposition
\begin{equation}\label{eq:mds}
\begin{split}
  f=\lim_{n\to\infty}\Exp_{n+1} f
  &=\Exp_m f+\lim_{n\to\infty}\sum_{k=m}^{n}\D_k f \\
  &=\sum_{I\in\mathscr{D}_m}\Exp_I f+\lim_{n\to\infty}\sum_{k=m}^{n}\sum_{I\in\mathscr{D}_k}\D_I f
\end{split}
\end{equation}
valid for any $m\in\Z$. The number $m$ will be considered fixed throughout most of the arguments. By abuse of notation, the operator $\D_I$ will be redefined as $\D_I+\Exp_I$ for $I\in\mathscr{D}_m$; then the identity \eqref{eq:mds} attains a simpler form without the first sum on the right.

\subsection{Haar functions}
Sometimes it is useful to write the operators $\D_I$ and $\Exp_I$ in terms of Haar functions $h_I^{\eta}$, $\eta\in\{0,1\}^N$, which satisfy
\begin{equation*}
  \supp h^{\eta}_I\subseteq I,\qquad h^{\eta}_I |_{I'}=\text{const}\quad\forall I'\in\children(I),\qquad\Norm{h^{\eta}_I}{\infty}\lesssim\abs{I}^{-1/2}
\end{equation*}
as well as
\begin{equation*}
  \int h^{\eta}_I h^{\theta}_I\ud x=\delta_{\eta\theta},\qquad h^0_I:=\abs{I}^{-1/2}1_I.
\end{equation*}
(The precise definition of $h^{\eta}_I$ for $\eta\neq 0$ may be done in a variety of ways, and is not important for the present purposes.) Then
\begin{equation*}
  \D_I f=\sum_{\eta\in\{0,1\}^N\setminus\{0\}}h_I^{\eta}\pair{h^{\eta}_I}{f},\qquad
  \Exp_I f=h_I^0\pair{h_I^0}{f}.
\end{equation*}

\subsection{Random dyadic systems; good and bad cubes}

Choosing a random dyadic system simply amounts to a random choice of the parameterising binary sequence $\beta=(\beta_j)_{j\in\Z}$, according to the canonical product probability measure $\prob_{\beta}$ on $(\{0,1\}^N)^{\Z}$ which makes the coordinates $\beta_j$ independent and identically distributed with $\prob_{\beta}(\beta_j=\eta)=2^{-N}$ for all $\eta\in\{0,1\}^N$. The symbol $\Exp_{\beta}$ denotes the expectation over the random variables $\beta_j$, $j\in\Z$; I will also use conditional expectations of the type $\Exp_{\beta}[\,\cdot\,|\beta_j:j\in\mathscr{J}]$, which means (as usual) that the variables $\beta_j$, $j\in\mathscr{J}$, are held fixed, and only those $\beta_j$ with $j\in\Z\setminus\mathscr{J}$ are averaged out.

Following Nazarov, Treil and Volberg, a dyadic cube $I$ will be called bad, if it is relatively close to the boundary of a much bigger dyadic cube. However, only one dyadic system rather than two will be considered at a time here, so $I$ will be compared with bigger cubes of the same dyadic system. More precisely, given parameters $r\in\Z_+$ and $\gamma\in(0,\tfrac12)$, a cube $I\in\mathscr{D}$ is said to be bad if there exists a $J\in\mathscr{D}$ with $\ell(J)\geq 2^r\ell(I)$ such that $\dist(I,\partial J)\leq\ell(I)^{\gamma}\ell(J)^{1-\gamma}$. Otherwise, $I$ is said to be good.

A pair of cubes $(I,J)\in\mathscr{D}\times\mathscr{D}$ is said to be good, if the smaller cube, say $I$, satisfies $\dist(I,\partial K)>\ell(I)^{\gamma}\ell(K)^{1-\gamma}$ for all $K\in\mathscr{D}$ with $2^r\ell(I)\leq\ell(K)\leq\ell(J)$. (Note that the condition is trivially true for $\ell(J)<2^r\ell(I)$.)

In the treatment of a Calder\'on--Zygmund kernel with H\"older exponent $\alpha$, the choice $\displaystyle\gamma:=\frac{\alpha}{2(N+\alpha)}$ is useful. In the sequel, some simple algebra involving this number will take place every now and then; however, the reader should not be misled to think that this precise choice is particularly critical. I have made this choice mainly because (i) it works and (ii) it is the one chosen by Nazarov--Treil--Volberg and used in several papers by now. However, any smaller $\gamma$ (depending only on $\alpha$ and $N$) would work equally well.

 The cubes of $\mathscr{D}^{\beta}$ will be often explicitly considered in the form $I\dot+\beta$, with $I\in\mathscr{D}^0$. Under this parameterisation, it is important to observe a fundamental independence property regarding goodness. First, by definition, the spatial position of
 \begin{equation*}
  I\dot+\beta:=I+\sum_{j:2^{-j}<\ell(I)}2^{-j}\beta_j
\end{equation*}
depends only on $\beta_j$ for $2^{-j}<\ell(I)$. Second, the relative position of $I\dot+\beta$ with respect to a bigger cube
\begin{equation*}
  J\dot+\beta=J+\sum_{j:2^{-j}<\ell(I)}2^{-j}\beta_j+\sum_{j:\ell(I)\leq 2^{-j}<\ell(J)}2^{-j}\beta_j
\end{equation*}
depends only on $\beta_j$ for $\ell(I)\leq 2^{-j}<\ell(J)$. Thus, the position and goodness of $I\dot+\beta$ are independent.

It is an immediate consequence of symmetry that the probability of a particular cube $I\in\mathscr{D}$ being bad is a number depending only on $r$, $\gamma$ and $N$. This number, $\pi_{\bad}$, maybe easily estimated as $\pi_{\bad}\lesssim_{\gamma,N} 2^{-r\gamma}$. (Thanks to the above mentioned independence of position and goodness, the computation is only slightly different from the case of two independent random systems considered in \cite{NTV:Tb}.) In much of the earlier work based on good and bad cubes, it was important that this number can be made as small as one likes by fixing $r$ large enough, and the requirements for its magnitude depended on the implicit constants in certain square function estimates. Here, it will only be necessary to choose $r$ large enough so that $\pi_{\bad}<1$, hence $\pi_{\good}:=1-\pi_{\bad}>0$, which is a simple geometric condition.

\subsection{Notational conventions}
The proof of the $A_2$ conjecture is now about to start. It will deal with a measure $w\in A_2$ and its dual measure $\sigma:=w^{-1}$, which has the same $A_2$ characteristic $\Norm{w}{A_2}=\Norm{\sigma}{A_2}$.

In the estimate to be proven, the precise dependence on the weight $w$ is decisive, so such dependence will always be indicated explicitly. On the other hand, the particular dependence on the properties of the arbitrary but fixed Calder\'on--Zygmund operator $T$ will be unimportant. Accordingly, the shorthand $A\lesssim B$ will be used for $A\leq C(T)\,B$, where $C(T)$ is any finite quantity depending at most on $T$. Here it is understood that the operator $T$ carries with it, in particular, the information on the dimension $N$ of the domain $\R^N$, as well as a H\"older exponent $\alpha$ and the related constant $C$ from the standard estimates verified by its kernel. The number $\gamma$ and a suitable choice of $r$ only depends on these quantities.

\section{The good martingale difference representation}

The representation result to be proven in this section is of an abstract nature, as the reader will easily realise, but the aim of its formulation below will not be the maximal generality, but rather the weighted application at hand in the present paper. Consider an integer $m$ fixed, while $n$ is a variable, which is taken to approach infinity. A summation over some intervals $I\in\mathscr{I}$, with the additional restriction that $2^{-n}<\ell(I)\leq 2^{-m}$, will be abbreviated as
\begin{equation*}
  \sum_{\substack{I\in\mathscr{I} \\ 2^{-n}<\ell(I)\leq 2^{-m}}}=:\sum_{I\in\mathscr{I}}^n.
\end{equation*}
It will not quite be true that only good cubes are needed in the representation; however, it can be arranged that the bigger cube in any required pairing
\begin{equation*}
  T_{JI}:=\pair{\D_J g}{T\D_I f}
\end{equation*}
is always good, and also the pair of cubes is good, meaning that the smaller cube stays away from the boundaries of the bigger cubes up to the size of the bigger cube, and this slightly restricted joint goodness will be enough for the subsequent considerations.

An intermediate form between the original random martingale difference decomposition of Nazarov, Treil and Volberg \cite{NTV:Tb} and the present formulation is found in the proof of my vector-valued nonhomogenenous $Tb$ theorem \cite{Hytonen:nonhomog}, although there still with two independent dyadic systems.

\begin{theorem}\label{thm:goodMds}
Let $T\in\bddlin(L^2(w))$ and $f\in L^2(w)$, $g\in L^2(\sigma)$ be compactly supported. Then the following representation is valid:
\begin{align*}
  &\pair{g}{Tf}\cdot\pi_{\good}^2 \\
  &=\lim_{n\to\infty}\Exp_{\beta}\Big[\sum_{\substack{I,J\in\mathscr{D}^0 \\ \ell(J)\geq \ell(I)}}^n
     T_{J\dot+\beta,I\dot+\beta} 
     \,\Exp_{\beta}[1_{\good(\beta)}(I\dot+\beta):\beta_j:2^{-j}<\ell(J)]\,1_{\good(\beta)}(J\dot+\beta) \\
  &\phantom{=\lim_{n\to\infty}\Exp_{\beta}}+\sum_{\substack{I,J\in\mathscr{D}^0 \\ \ell(J)< \ell(I)}}^n
     T_{J\dot+\beta,I\dot+\beta} 
     \,1_{\good(\beta)}(I\dot+\beta)\,\Exp_{\beta}[1_{\good(\beta)}(J\dot+\beta):\beta_j:2^{-j}<\ell(I)]\Big] \\
   &=\lim_{n\to\infty}\Exp_{\beta}\sum_{\substack{I,J\in\mathscr{D}^{\beta} \\ \textup{bigger cube good} \\ \textup{pair $(I,J)$ good}}}^n
     \pair{\D_J g}{T\D_I f}\cdot\pi_{IJ},
\end{align*}
where  $\pi_{IJ}\in[0,1]$ are the values of the conditional probabilities on the previous lines after reindexing the summation in terms of $\mathscr{D}^{\beta}$. The last summation condition is short hand for the requirement that the cube $J$ is good if $\ell(J)\geq\ell(I)$, the cube $I$ is good if $\ell(I)>\ell(J)$, and the pair of cubes $(I,J)$ is always good.
\end{theorem}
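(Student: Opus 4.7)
The plan starts from the deterministic identity
\begin{equation*}
\pair{g}{Tf}=\lim_n\sum^n_{I,J\in\mathscr{D}^\beta}\pair{\D_J g}{T\D_I f},
\end{equation*}
valid for every $\beta$, which follows from the $L^2$-convergence of the martingale differences applied to both $f$ and $g$ together with the boundedness of $T$. The partial sums equal $\pair{\Exp_{n+1}^\beta g}{T\Exp_{n+1}^\beta f}$ and are uniformly bounded (in $n$ and $\beta$) by $\Norm{T}{}\Norm{f}{L^2}\Norm{g}{L^2}$, so $\Exp_\beta$ can be exchanged with $\lim_n$ by dominated convergence. I split the double sum into $\ell(J)\geq\ell(I)$ and $\ell(J)<\ell(I)$ and argue the first case; the second is symmetric under $I\leftrightarrow J$, $f\leftrightarrow g$.

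Denote by $\mathfrak{F}_{<\ell}$ the $\sigma$-algebra generated by the $\beta_j$ with $2^{-j}<\ell$. For fixed $I,J\in\mathscr{D}^0$ with $\ell(J)\geq\ell(I)$, the pairing $T_{J\dot+\beta,I\dot+\beta}$ and the factor $\Exp_\beta[1_{\good}(I\dot+\beta)\mid\beta_j:2^{-j}<\ell(J)]$ are both $\mathfrak{F}_{<\ell(J)}$-measurable, while $1_{\good}(J\dot+\beta)$ is measurable with respect to $\sigma(\beta_j:2^{-j}\geq\ell(J))$, independent of $\mathfrak{F}_{<\ell(J)}$, with expectation $\pi_{\good}$. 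Pulling out $1_{\good}(J\dot+\beta)$ by independence and reabsorbing the remaining conditional expectation via the tower property yields
\begin{align*}
&\Exp_\beta\bigl[T_{J\dot+\beta,I\dot+\beta}\,\Exp_\beta[1_{\good}(I\dot+\beta)\mid\beta_j:2^{-j}<\ell(J)]\,1_{\good}(J\dot+\beta)\bigr] \\
&\qquad=\pi_{\good}\,\Exp_\beta\bigl[T_{J\dot+\beta,I\dot+\beta}\,1_{\good}(I\dot+\beta)\bigr].
\end{align*}

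The decisive step is summing the right-hand side over $J$ with $\ell(J)\geq\ell(I)$ at fixed $I$. With the paper's convention absorbing $\Exp_J$ into $\D_J$ at the top scale $\mathscr{D}_m$, telescoping produces
\begin{equation*}
\sum_{J\in\mathscr{D}^0:\,\ell(J)\geq\ell(I)}^n T_{J\dot+\beta,I\dot+\beta}=\pair{\Exp_{k_1+1}^\beta g}{T\,\D_{I\dot+\beta}^\beta f},\qquad 2^{-k_1}=\ell(I),
\end{equation*}
and both factors on the right depend on $\beta$ only through $\beta_j$ with $2^{-j}<\ell(I)$, so this inner $J$-sum is $\mathfrak{F}_{<\ell(I)}$-measurable. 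By the independence of position and goodness recorded in the preliminaries, $1_{\good}(I\dot+\beta)$ depends only on $\beta_j$ with $2^{-j}\geq\ell(I)$ and is therefore independent of the inner sum; hence $\Exp_\beta[\sum_J T_{JI}\,1_{\good}(I\dot+\beta)]=\pi_{\good}\Exp_\beta[\sum_J T_{JI}]$. Combining this with the per-$(I,J)$ identity, summing over $I\in\mathscr{D}^0$, adjoining the symmetric $\ell(J)<\ell(I)$ contribution, and letting $n\to\infty$ produces the first displayed equality with the factor $\pi_{\good}^2$.

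For the second equality, I reindex via the bijection $I\leftrightarrow I\dot+\beta$. The factor $1_{\good}(J\dot+\beta)$ restricts the sum to pairs with the larger cube good; and $\Exp_\beta[1_{\good}(I\dot+\beta)\mid\beta_j:2^{-j}<\ell(J)]$ vanishes unless the pair $(I,J)$ is good, because the failure of pair-goodness --- that $I\dot+\beta$ lies within distance $\ell(I)^{\gamma}\ell(K)^{1-\gamma}$ of some $K\in\mathscr{D}^\beta$ of intermediate size $2^r\ell(I)\leq\ell(K)\leq\ell(J)$ --- is already determined on $\mathfrak{F}_{<\ell(J)}$ and forces $1_{\good}(I\dot+\beta)=0$ for every completion of $\beta$. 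When the pair is good, the conditional expectation reduces to the probability $\pi_{IJ}\in[0,1]$ that $I\dot+\beta$ is additionally good with respect to the still-random cubes of size exceeding $\ell(J)$. The main obstacle is precisely the telescoping-plus-measurability step: individual $T_{JI}$ and $1_{\good}(I\dot+\beta)$ are generally not independent (they share the intermediate-scale $\beta_j$'s with $\ell(I)\leq 2^{-j}<\ell(J)$), so extracting the second factor of $\pi_{\good}$ requires exactly the collapse of the inner $J$-sum into an $\mathfrak{F}_{<\ell(I)}$-measurable quantity.
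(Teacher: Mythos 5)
Your proposal is correct, and it takes a slightly different, arguably more streamlined, route than the paper. The paper works \emph{forward} from $\pair{g}{Tf}$: it first inserts $1_{\good}(I\dot+\beta)$ by noting that $\pair{g}{T\D_{I\dot+\beta}f}$ with $g$ \emph{un-expanded} is already $\sigma(\beta_j:2^{-j}<\ell(I))$-measurable; only then does it expand $g=\sum_J\D_J g$, split into the pieces $\ell(J)\geq\ell(I)$ and $\ell(J)<\ell(I)$, insert $1_{\good}(J\dot+\beta)$ by the tower/independence argument in the first piece, and finally \emph{compare two expansions} of $\pair{g}{Tf}$ to isolate the equality for the $\ell(J)\geq\ell(I)$ part, closing with a symmetric argument for the other half. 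You instead start from the target expression and reduce it directly: the per-pair identity removes one factor of $\pi_{\good}$ (this is exactly the paper's tower/independence step run backwards), and then the telescoping $\sum_{\ell(J)\geq\ell(I)}^n T_{J\dot+\beta,I\dot+\beta}=\pair{\Exp_{k_1+1}^{\beta}g}{T\D_{I\dot+\beta}f}$ manufactures the $\sigma(\beta_j:2^{-j}<\ell(I))$-measurability that the paper gets for free from the un-expanded $g$; this removes the second $\pi_{\good}$. The net effect is that you avoid the paper's intermediate step of comparing two expansions and of separately handling the $B$-part, and you make explicit the point you rightly flag as the crux --- that the individual $T_{J\dot+\beta,I\dot+\beta}$ and $1_{\good}(I\dot+\beta)$ are \emph{not} independent (they share the intermediate-scale $\beta_j$), so the $J$-sum must be collapsed before independence can be invoked. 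Everything else (dominated convergence, finiteness of the sums from the compact supports and the scale restriction $2^{-n}<\ell\leq 2^{-m}$, the identification of $\pi_{IJ}$ and the vanishing of the conditional probability when the pair is bad) matches the paper. A couple of cosmetic nits: the partial sums equal $\pair{\Exp_n^{\beta}g}{T\Exp_n^{\beta}f}$ rather than $\Exp_{n+1}^{\beta}$ under the paper's $\sum^n$ convention, and the uniform bound should be stated in the weighted norms $L^2(w),L^2(\sigma)$ with a $w$-dependent constant; neither affects the argument.
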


The rest of this section is concerned with the proof of this theorem. Observe first that
\begin{equation*}
  \pair{g}{Tf}=\pair{g}{T\Exp_n f}+\pair{g}{T(f-\Exp_n f)},
\end{equation*}
where the second term satisfies
\begin{equation*}
  \abs{\pair{g}{T(f-\Exp_n f)}}
  \leq\Norm{g}{L^2(\sigma)}\Norm{T}{\bddlin(L^2(w))}\Norm{f-\Exp_n f}{L^2(w)},
\end{equation*}
and the last factor is dominated by $C(w)\Norm{f}{L^2(w)}$ and tends to zero as $n\to\infty$. (At this point, the precise dependence of $C(w)$ on the weight is of not important.) By dominated convergence, also the expectation over the different dyadic systems of this quantity tends to zero as $n\to\infty$. Thus
\begin{equation*}
  \pair{g}{Tf}=\Exp_{\beta}\pair{g}{T\Exp_n f}+\eps_n,
\end{equation*}
where $\eps_n\to 0$ as $n\to\infty$. I keep using $\eps_n$ in this meaning; it need not be the exact same quantity on each occurrence. The compact support of $f$ ensures that $\Exp_n f$ is the finite sum
\begin{equation*}
  \Exp_n f=\sum_{I\in\mathscr{D}^{\beta}}^n\D_I f;
\end{equation*}
recall that $\D_I$ is abuse for $\D_I+\Exp_I$ when $\ell(I)=2^{-m}$.

Now I investigate the effect of the expectation $\Exp_{\beta}$ in more detail. Since $\D_{I\dot+\beta}f$ depends only on $\beta_j$ for $2^{-j}<\ell(I)$, whereas the goodness of $I\dot+\beta$ depends on the complementary parameters  $\beta_j$ for $2^{-j}\geq\ell(I)$, there holds by independence that
\begin{align*}
  \Exp_{\beta}[\pair{g}{T\D_{I\dot+\beta}f}1_{\good(\beta)}(I\dot+\beta)]
  &=\Exp_{\beta}[\pair{g}{T\D_{I\dot+\beta}f}]\cdot \Exp_{\beta}[1_{\good(\beta)}(I\dot+\beta)] \\
  &=\Exp_{\beta}[\pair{g}{T\D_{I\dot+\beta}f}]\cdot \pi_{\good}
\end{align*}
and hence
\begin{align*}
  \Exp_{\beta}\pair{g}{T\Exp_n f}
  &=\Exp_{\beta}\sum_{I\in\mathscr{D}^{\beta}}^n\pair{g}{T\D_{I}f}
  =\sum_{I\in\mathscr{D}^{0}}^n \Exp_{\beta}\pair{g}{T\D_{I\dot+\beta}f} \\
  &=\frac{1}{\pi_{\good}}\sum_{I\in\mathscr{D}^{0}}^n \Exp_{\beta}[\pair{g}{T\D_{I\dot+\beta}f}1_{\good}(\beta)(I\dot+\beta)] \\
  &=\frac{1}{\pi_{\good}} \Exp_{\beta}\sum_{I\in\mathscr{D}^{\beta}_{\good}}^n \pair{g}{T\D_{I}f}.
\end{align*}
Moreover, writing $g=\Exp_n g+(g-\Exp_n g)$, it follows that
\begin{align*}
  \sum_{I\in\mathscr{D}^{\beta}_{\good}}^n \pair{g}{T\D_{I\dot+\beta}f}
  =\sum_{I\in\mathscr{D}^{\beta}_{\good}}^n \sum_{J\in\mathscr{D}^{\beta}}^n \pair{\D_J g}{T\D_{I}f}
     +\Bpair{g-\Exp_n g}{T\sum_{I\in\mathscr{D}^{\beta}_{\good}}^n \D_I f},
\end{align*}
where the last term is dominated by
\begin{align*}
  \Norm{g-\Exp_n g}{L^2(\sigma)}\Norm{T}{\bddlin(L^2(w))} C(w)\Norm{f}{L^2(w)},
\end{align*}
and the first factor is bounded by $C(w)\Norm{g}{L^2(\sigma)}$ and tends to zero as $n\to\infty$. By dominated convergence again, it follows that
\begin{align*}
   \Exp_{\beta}\pair{g}{T\Exp_n f}
   =\frac{1}{\pi_{\good}} \Exp_{\beta}\sum_{I\in\mathscr{D}^{\beta}_{\good}}^n \sum_{J\in\mathscr{D}^{\beta}}^n \pair{\D_J g}{T\D_{I}f}+\eps_n.
\end{align*}

I keep manipulating the double sum, making use of the dependence of the various random quantities on the different parameters $\beta_j$, as well as basic properties of conditional expectations. There holds
\begin{align*}
  &\frac{1}{\pi_{\good}} \Exp_{\beta}\sum_{I\in\mathscr{D}^{\beta}_{\good}}^n \sum_{J\in\mathscr{D}^{\beta}}^n \pair{\D_J g}{T\D_{I}f} \\
  &=\frac{1}{\pi_{\good}} \Exp_{\beta}\Big(\sum_{I\in\mathscr{D}^{\beta}_{\good}}^n \sum_{\substack{J\in\mathscr{D}^{\beta} \\ \ell(J)\geq\ell(I)}}^n
      +\sum_{I\in\mathscr{D}^{\beta}_{\good}}^n \sum_{\substack{J\in\mathscr{D}^{\beta}\\ \ell(J)<\ell(I)}}^n\Big) \pair{\D_J g}{T\D_{I}f} =:A+B,
\end{align*}
and further
\begin{align*}
  A&=\frac{1}{\pi_{\good}}\sum_{\substack{I,J\in\mathscr{D}^{0} \\ \ell(J)\geq\ell(I)}}^n
       \Exp_{\beta}[\pair{\D_{J\dot+\beta} g}{T\D_{I\dot+\beta}f}\cdot 1_{\good(\beta)}(I\dot+\beta)] \\
  &=\frac{1}{\pi_{\good}}\sum_{\substack{I,J\in\mathscr{D}^{0} \\ \ell(J)\geq\ell(I)}}^n
       \Exp_{\beta}\big[\pair{\D_{J\dot+\beta} g}{T\D_{I\dot+\beta}f}\cdot \Exp_{\beta}\big[1_{\good(\beta)}(I\dot+\beta)|\beta_j:2^{-j}<\ell(J)\big]\big],
\end{align*}
where the first factor inside $\Exp_{\beta}$ only depends on $\beta_j$ for $2^{-j}<\ell(J)$, which allowed to replace the second factor by its conditional expectation with respect to these variables. Let then
\begin{equation*}
  \pi_{I\dot+\beta,\ell(J)}^{\beta}:=\Exp_{\beta}\big[1_{\good(\beta)}(I\dot+\beta)|\beta_j:2^{-j}<\ell(J)\big];
\end{equation*}
by definition, this conditional probability only depends on $\beta_j$ for $2^{-j}<\ell(J)$. As the goodness of $J\dot+\beta$ depends on the complementary variables $\beta_j$ for $2^{-j}\geq\ell(J)$, independence may be used again to write
\begin{align*}
  &\Exp_{\beta}\big[\pair{\D_{J\dot+\beta} g}{T\D_{I\dot+\beta}f}\cdot \pi_{I\dot+\beta,\ell(J)}^{\beta}\cdot  \Exp_{\beta}[1_{\good(\beta)}(J\dot+\beta)] \\
  &=\Exp_{\beta}\big[\pair{\D_{J\dot+\beta} g}{T\D_{I\dot+\beta}f}\cdot \pi_{I\dot+\beta,\ell(J)}^{\beta}   \cdot 1_{\good(\beta)}(J\dot+\beta)\big].
\end{align*}
Using this and recalling that $\Exp_{\beta}[1_{\good(\beta)}(J\dot+\beta)]=\pi_{\good}$, there holds
\begin{equation*}
  A =\frac{1}{\pi_{\good}^2}\Exp_{\beta}
  \sum_{\substack{I,J\in\mathscr{D}^{0} \\ \ell(J)\geq\ell(I)}}^n
       \pair{\D_{J\dot+\beta} g}{T\D_{I\dot+\beta}f} \times \pi_{I\dot+\beta,\ell(J)}^{\beta} \times 1_{\good(\beta)}(J\dot+\beta).
\end{equation*}
While the conditional probability $\pi_{I\dot+\beta,\ell(J)}^{\beta}$ is some number between $0$ and $1$ in general, it is important to notice a particular case when it is zero: this is when $I\dot+\beta$ is already bad with respect to some interval $K\in\mathscr{D}^{\beta}$ of length at most $\ell(J)$, in particular when $I\dot+\beta$ is bad with respect to $J\dot+\beta$. Hence, if $\pi_{I\dot+\beta,\ell(J)}^{\beta}>0$, then $(I\dot+\beta,J\dot+\beta)$ is good, and this additional restriction may be introduced without changing the value of the sum. Hence, reindexing in terms of $\mathscr{D}^{\beta}$ again,
\begin{align*}
  A &=\frac{1}{\pi_{\good}^2}
  \Exp_{\beta}\sum_{I\in\mathscr{D}^{\beta}}^n
  \sum_{\substack{J\in\mathscr{D}^{\beta}_{\good} \\ \ell(J)\geq\ell(I) \\ (I,J)\good}}^n
     \pair{\D_J g}{T\D_J f}\cdot\pi_{IJ},
\end{align*}
for certain numbers $\pi_{IJ}\in[0,1]$, whose dependence on $\beta$ is suppressed from the notation. 

In part $B$, simply by independence (the first factor depends on $\beta_j$ for $2^{-j}<\ell(I)$, the second on $\beta_j$ for $2^{-j}\geq\ell(I)$):
\begin{align*}
  B
   &=\frac{1}{\pi_{\good}}\sum_{\substack{I,J\in\mathscr{D}^{0} \\ \ell(J)<\ell(I)}}^n
       \Exp_{\beta}[\pair{\D_{J\dot+\beta} g}{T\D_{I\dot+\beta}f}\cdot 1_{\good(\beta)}(I\dot+\beta)] \\
   &=\frac{1}{\pi_{\good}}\sum_{\substack{I,J\in\mathscr{D}^{0} \\ \ell(J)<\ell(I)}}^n
       \Exp_{\beta}\pair{\D_{J\dot+\beta} g}{T\D_{I\dot+\beta}f}\cdot \Exp_{\beta}[1_{\good(\beta)}(I\dot+\beta)] \\
   &=\sum_{\substack{I,J\in\mathscr{D}^{0} \\ \ell(J)<\ell(I)}}^n
       \Exp_{\beta}\pair{\D_{J\dot+\beta} g}{T\D_{I\dot+\beta}f}
       =\Exp_{\beta}\sum_{\substack{I,J\in\mathscr{D}^{\beta} \\ \ell(J)<\ell(I)}}^n
       \pair{\D_{J} g}{T\D_{I}f}.
\end{align*}
Altogether, it has now been shown that
\begin{align*}
  \pair{g}{Tf}
  =\frac{1}{\pi_{\good}^2}
  &\Exp_{\beta}\sum_{I\in\mathscr{D}^{\beta}}^n
  \sum_{\substack{J\in\mathscr{D}^{\beta}_{\good} \\ \ell(J)\geq\ell(I) \\ (I,J)\good}}^n
     \pair{\D_J g}{T\D_I f}\cdot\pi_{IJ} \\
     &+\Exp_{\beta}\sum_{\substack{I,J\in\mathscr{D}^{\beta} \\ \ell(J)<\ell(I)}}^n
       \pair{\D_{J} g}{T\D_{I}f}+\eps_n,
\end{align*}
whereas also
\begin{align*}
  \pair{g}{Tf}
  =\pair{\Exp_n g}{T\Exp_n f}+\eps_n
  =\ &\Exp_{\beta}\sum_{I\in\mathscr{D}^{\beta}}^n
  \sum_{\substack{J\in\mathscr{D}^{\beta} \\ \ell(J)\geq\ell(I)}}^n
     \pair{\D_J g}{T\D_I f} \\
     &+\Exp_{\beta}\sum_{\substack{I,J\in\mathscr{D}^{\beta} \\ \ell(J)<\ell(I)}}^n
       \pair{\D_{J} g}{T\D_{I}f}+\eps_n,
\end{align*}
Comparing these equalities, it follows that
\begin{align*}
  \Exp_{\beta}\sum_{I\in\mathscr{D}^{\beta}}^n
  \sum_{\substack{J\in\mathscr{D}^{\beta} \\ \ell(J)\geq\ell(I)}}^n
     \pair{\D_J g}{T\D_I f}
   =\frac{1}{\pi_{\good}^2}
  \Exp_{\beta}\sum_{I\in\mathscr{D}^{\beta}}^n
  \sum_{\substack{J\in\mathscr{D}^{\beta}_{\good} \\ \ell(J)\geq\ell(I) \\ (I,J)\good}}^n
     \pair{\D_J g}{T\D_I f}\cdot\pi_{IJ}+\eps_n.
\end{align*}
A symmetric treatment, with the r\^oles of $I$ and $J$ reversed, also shows that
\begin{equation*}
    \Exp_{\beta}\sum_{I\in\mathscr{D}^{\beta}}^n
  \sum_{\substack{J\in\mathscr{D}^{\beta} \\ \ell(J)<\ell(I)}}^n
     \pair{\D_J g}{T\D_I f}
   =\frac{1}{\pi_{\good}^2}
  \Exp_{\beta}\sum_{J\in\mathscr{D}^{\beta}}^n
  \sum_{\substack{I\in\mathscr{D}^{\beta}_{\good} \\ \ell(J)<\ell(I) \\ (J,I)\good}}^n
     \pair{\D_J g}{T\D_I f}\cdot\pi_{IJ}+\eps_n
\end{equation*}
for some further numbers $\pi_{IJ}\in[0,1]$ related to conditional probabilities as before. Thus
\begin{align*}
  \pair{g}{Tf}&=
  \Exp_{\beta}\Big(\sum_{I\in\mathscr{D}^{\beta}}^n
  \sum_{\substack{J\in\mathscr{D}^{\beta} \\ \ell(J)\geq\ell(I)}}^n+
  \sum_{I\in\mathscr{D}^{\beta}}^n
  \sum_{\substack{J\in\mathscr{D}^{\beta} \\ \ell(J)<\ell(I)}}^n\Big)
     \pair{\D_J g}{T\D_I f}+\eps_n \\
   &=\frac{1}{\pi_{\good}^2}
  \Exp_{\beta}\Big(\sum_{I\in\mathscr{D}^{\beta}}^n
  \sum_{\substack{J\in\mathscr{D}^{\beta}_{\good} \\ \ell(J)\geq\ell(I) \\ (I,J)\good}}^n+
  \sum_{J\in\mathscr{D}^{\beta}}^n
  \sum_{\substack{I\in\mathscr{D}^{\beta}_{\good} \\ \ell(J)<\ell(I) \\ (J,I)\good}}^n\Big)
     \pair{\D_J g}{T\D_I f}\cdot\pi_{IJ}+\eps_n,
\end{align*}
which is the claim of the theorem.

\section{Decomposition into dyadic shifts}

With the martingale difference decomposition of the previous section as the starting point, the next goal is to express the operator $T$ as an average of fundamental building blocks called dyadic shifts. It is first in order to give a definition. Although expressed somewhat differently, it is essentially equivalent to that given by Lacey, Petermichl and Reguera \cite[Definition~1.5]{LPR}.

\begin{definition}\label{def:shifts}
A dyadic shift with parameters $(u,v)$ is an operator
\begin{equation*}
  \sha=\sum_{K\in\mathscr{D}}A_K,
\end{equation*}
where $\mathscr{D}$ is a dyadic system and each $A_K$ has the form
\begin{align*}
  A_K f(x) &:=\fint_K a_K(x,y)f(y)\ud y, \qquad\Norm{a_K}{\infty}\lesssim 1,\\
  a_K(x,y)&=\sum_{\substack{I\in\mathscr{D}; I\subseteq K \\ \ell(I)=2^{-u}\ell(K)}}\sum_{\substack{J\in\mathscr{D}; J\subseteq K \\ \ell(J)=2^{-v}\ell(K)}}
    \sum_{\eta,\theta\in\{0,1\}^N} a_{IJK}^{\eta\theta}h^{\theta}_J(x) h^{\eta}_I(y).
\end{align*}
A dyadic shift is called finite, if only finitely many $A_K$ are nonzero; bounded, if $\Norm{A_K f}{L^2}\lesssim\Norm{f}{L^2}$; and good, if
\begin{equation*}
  \dist(J,\partial K)\geq\tfrac12\ell(J)^{\gamma}\ell(K)^{1-\gamma}=2^{-1-v\gamma}\ell(K),
\end{equation*}
and similarly with $I$ in place of $J$, for all $I$ and $J$ for which some $a_{IJK}^{\eta\theta}$ is nonzero.
\end{definition}

Only finite shifts will be needed in the present considerations. This is a qualitative convenience, which ensures that no problems of convergence can arise; however, all the estimates will obviously have to be independent of the number of nonzero $A_K$. The goal of this section is to express a Calder\'on--Zygmung operator as a weak limit of averages of good, finite, uniformly bounded dyadic shifts:

\begin{theorem}\label{thm:repShifts}
Let $T\in\bddlin(L^2)$ be a bounded Calder\'on--Zygmund operator (hence also $T\in\bddlin(L^2(w))$ and $f\in L^2(w),g\in L^2(\sigma)$ be compactly supported. Then
\begin{align*}
  \pair{g}{Tf}
  =\lim_{n\to\infty}\Exp_{\beta}\sum_{u,v=r}^{\infty}2^{-\max(u,v)\alpha/2}\pair{g}{\sha^{uv}_{n\beta} f},
\end{align*}
where $\sha^{uv}_{n\beta}$ is a good finite dyadic shift adapted to the dyadic system $\mathscr{D}^{\beta}$, with parameters $(u,v)$, and $\Norm{\sha^{uv}_{n\beta}f}{L^2}\lesssim\Norm{f}{L^2}$ uniformly in $u,v,n$ and $\beta$.
\end{theorem}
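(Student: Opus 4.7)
The strategy is to feed the representation of Theorem~\ref{thm:goodMds} into the Haar basis and reorganise the resulting sum of matrix coefficients as an average over shifts indexed by the relative scales $(u,v)$ of the two cubes inside a hosting ancestor $K$. First, I substitute $\D_I f=\sum_{\eta\neq 0}h_I^\eta\pair{h_I^\eta}{f}$ and the analogue for $\D_J g$ into the double sum of Theorem~\ref{thm:goodMds}, reducing matters to
\begin{equation*}
  \pair{g}{Tf}\cdot\pi_{\good}^2=\lim_{n\to\infty}\Exp_{\beta}\sum^n_{(I,J,\eta,\theta)}\pair{h_J^\theta}{Th_I^\eta}\pair{h_I^\eta}{f}\pair{h_J^\theta}{g}\pi_{IJ},
\end{equation*}
summed over admissible quadruples (inheriting the good-pair and good-bigger-cube constraints from Theorem~\ref{thm:goodMds}).

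Next, I assign a unique hosting cube $K\in\mathscr{D}^{\beta}$ to each admissible pair. Let $L=L(I,J)$ be the smallest dyadic cube containing $I\cup J$, and define $u_0,v_0\geq 0$ by $\ell(I)=2^{-u_0}\ell(L)$, $\ell(J)=2^{-v_0}\ell(L)$. I take $K$ to be the ancestor of $L$ located $k:=\max(0,r-\min(u_0,v_0))$ levels above, so that $u:=u_0+k$ and $v:=v_0+k$ both satisfy $u,v\geq r$; the map $(I,J)\mapsto(K,u,v)$ is then injective on admissible pairs. For each $(u,v)$ with $u,v\geq r$, I define the finite dyadic shift
\begin{equation*}
  \sha^{uv}_{n\beta}f:=\sum_K\fint_K a_K(x,y)f(y)\ud y,\qquad a_K(x,y):=\sum_{I,J,\eta,\theta}a^{\eta\theta}_{IJK}h_J^\theta(x)h_I^\eta(y),
\end{equation*}
with
\begin{equation*}
  a^{\eta\theta}_{IJK}:=\pi_{\good}^{-2}\cdot 2^{\max(u,v)\alpha/2}\cdot|K|\cdot\pi_{IJ}\cdot\pair{h_J^\theta}{Th_I^\eta}
\end{equation*}
for admissible triples $(I,J,K)$ produced by the routing (within $2^{-n}<\ell(\cdot)\leq 2^{-m}$), and zero otherwise. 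Using $\pair{h_I^\eta}{f}=\int_K h_I^\eta f\ud x$ (since $I\subseteq K$), summing $2^{-\max(u,v)\alpha/2}\pair{g}{\sha^{uv}_{n\beta}f}$ over $u,v\geq r$ exactly reassembles the displayed right-hand side divided by $\pi_{\good}^2$, which proves the representation formula.

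The main obstacle is the uniform coefficient bound $|a^{\eta\theta}_{IJK}|\lesssim|I|^{1/2}|J|^{1/2}$, which, after pointwise computation of $\Norm{a_K}{\infty}$ on $K\times K$, is equivalent to $\Norm{a_K}{\infty}\lesssim 1$; unpacking definitions, one has to prove
\begin{equation*}
  |K|\cdot|\pair{h_J^\theta}{Th_I^\eta}|\lesssim|I|^{1/2}|J|^{1/2}\cdot 2^{-\max(u,v)\alpha/2}.
\end{equation*}
For separated pairs ($I\cap J=\emptyset$, so $u_0,v_0\geq 1$) the standard Haar-coefficient estimate reads $|\pair{h_J^\theta}{Th_I^\eta}|\lesssim |I|^{1/2}|J|^{1/2}\min(\ell(I),\ell(J))^\alpha/D(I,J)^{N+\alpha}$ with $D(I,J):=\ell(I)+\ell(J)+\dist(I,J)$; pair goodness together with the choice $\gamma=\alpha/[2(N+\alpha)]$ produces $D(I,J)\gtrsim\min(\ell(I),\ell(J))^\gamma\ell(K)^{1-\gamma}$, and a direct calculation using $|K|=2^{uN}|I|=2^{vN}|J|$ yields the required $2^{-\max(u,v)\alpha/2}$ decay. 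For nested pairs (say $I\subseteq J$, so $v_0=0$, $v=r$, $u=u_0+r$) I decompose $\pair{h_J^\theta}{Th_I^\eta}$ across the children of $J$: on children not containing $I$ the same Calder\'on--Zygmund smoothness argument applies with goodness supplying the separation; on the child containing $I$ one uses the mean-zero of $h_J^\theta$ on each child of $J$ (or alternatively passes to $T^*$), and the bounded range of scales encoded by $v=r$ lets one absorb the $L^2$-boundedness of $T$. The case $J\subseteq I$ is symmetric.

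Finally, finiteness of $\sha^{uv}_{n\beta}$ is immediate from the compact support of $f,g$ and the restriction on $\ell(\cdot)$; goodness in the sense of Definition~\ref{def:shifts} follows because pair goodness combined with the choice of $K$ gives $\dist(I,\partial K),\dist(J,\partial K)\geq 2^{-1-\max(u,v)\gamma}\ell(K)$ (provided $r$ is chosen large enough in terms of $N$, which only affects the universal constants); and uniform $L^2$-boundedness $\Norm{\sha^{uv}_{n\beta}f}{L^2}\lesssim\Norm{f}{L^2}$ is the standard orthogonality-based consequence of the coefficient bound, as in~\cite{LPR}.
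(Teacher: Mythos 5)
Your overall strategy --- substitute the Haar expansion into Theorem~\ref{thm:goodMds}, assign a host cube $K$ to each admissible pair $(I,J)$, and read off shifts indexed by the relative scales $(u,v)$ --- is the same skeleton the paper uses. But two of the load-bearing steps do not go through as you describe.

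First, your choice of host cube fails the goodness requirement. You set $K:=L(I,J)^{(k)}$ where $L(I,J)$ is the minimal common dyadic ancestor and $k=\max(0,r-\min(u_0,v_0))$. When $\min(u_0,v_0)\geq r$ this gives $K=L(I,J)$, and then the shift-goodness inequality for the \emph{smaller} cube $I$ can fail. Indeed, the goodness of $J$ and the pair-goodness of $(I,J)$ only constrain the position of $I$ relative to cubes of sidelength at most $\ell(J)$; they say nothing about $\dist(I,\partial L)$ when $\ell(L)>\ell(J)$. If $J$ sits near the centre of $L$ (so $J$ is good) while $I$ lies near a corner of $L$ in a different child of $L$ (so $L$ is minimal), one has $\dist(I,\partial L)$ of order $\ell(I)$ while the required margin $\tfrac12\ell(I)^{\gamma}\ell(K)^{1-\gamma}$ is of order $\ell(I)^{\gamma}\ell(L)^{1-\gamma}$, which is much larger. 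The paper avoids this by \emph{not} taking the minimal ancestor: the host is $K=I^{(i+j+\theta(j))}$, where $j$ records the dyadic size of $D(I,J)/\ell(J)$ and $\theta(j)=\lceil(j\gamma+r)/(1-\gamma)\rceil$. This deliberately overshoots the minimal ancestor by just enough that $D(I,J)\leq\tfrac12\ell(J)^{\gamma}\ell(K)^{1-\gamma}$, whence $\dist(I,\partial K)\geq\dist(J,\partial K)-D(I,J)\geq\tfrac12\ell(J)^{\gamma}\ell(K)^{1-\gamma}\geq\tfrac12\ell(I)^{\gamma}\ell(K)^{1-\gamma}$. Your construction needs to be replaced by this calibrated choice.

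Second, and more seriously, the nested case is not under control. For $I\subset J$ with $\ell(I)=2^{-u_0}\ell(J)$, you need $\abs{\pair{h_J^{\theta}}{Th_I^{\eta}}}\lesssim(\abs{I}/\abs{J})^{1/2}2^{-u_0\alpha/2}$ (up to $r$-dependent constants) for the coefficient $a^{\eta\theta}_{IJK}$ to have the right size. But this decay estimate is \emph{false} for the full matrix coefficient: only the part
\begin{equation*}
  \pair{1_{(J')^c}(h^{\theta}_J-\ave{h^{\theta}_J}_{J'})}{Th^{\eta}_I}
\end{equation*}
decays, while the paraproduct part $\ave{h^{\theta}_J}_{J'}\pair{T^*1}{h^{\eta}_I}$ is merely bounded by $(\abs{I}/\abs{J})^{1/2}$, with no extra decay in $u_0$. (Passing to $T^*$ just moves the same problem to the other side, and the ``bounded range of scales'' you invoke does not exist here: $u_0$ is unbounded in $\Sigma_{\inside}$.) The paper treats this by extracting the paraproduct as a separate piece, and --- crucially --- its recasting as a single good dyadic shift cannot be done for a fixed $\beta$: one first uses the $\Exp_{\beta}$-average and the precise structure of the conditional probabilities $\pi_{IJ}$ to transfer the goodness requirement from $J$ to $I$, collapses the resulting telescoping sum over $J$, and then reverses the averaging. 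None of this machinery is present in your argument, and without it the paraproduct cannot be made into a shift of the required form. A related consequence is that the claimed ``standard orthogonality'' bound for uniform $L^2$-boundedness does not cover the paraproduct, which involves noncancellative Haar functions on all scales; that piece needs a Carleson-measure argument, as the paper indicates.
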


Consider the representation of $\pair{g}{Tf}$ provided by the previous section and, for the moment, the part of the series with $\ell(I)\leq\ell(J)$. The summation conditions
\begin{equation}\label{eq:implicitSC}
  I\in\mathscr{D}^{\beta},\quad J\in\mathscr{D}_{\good}^{\beta},\quad (I,J)\ \good,\quad 2^{-n}<\ell(I)\leq\ell(J)\leq 2^{-m}
\end{equation}
will be implicitly in force until further notice; only additional restrictions in summation will be indicated explicitly.

I rearrange the summation following the well-known procedure from Nazarov, Treil and Volberg \cite{NTV:Tb}. (Also the subsequent analysis will closely follow \cite{NTV:Tb}, as well as \cite{Hytonen:nonhomog}. Some details will only be cited from these sources.)
\begin{equation*}
  \sum_{\ell(I)\leq\ell(J)}
  =\sum_{\dist(I,J)\geq\ell(I)}  
     +\sum_{\substack{\dist(I,J)<\ell(I)\\ \ell(I)<2^{-r}\ell(J)}}+\sum_{\substack{\dist(I,J)<\ell(I)\\ \ell(I)\geq 2^{-r}\ell(J)}} 
   =:\Sigma_{\out}+\Sigma_{\inside}+\Sigma_{\near}.
\end{equation*}
When $I$ and $J$ are taken from the same dyadic system, as is the case here, the condition $\dist(I,J)<\ell(I)\leq(J)$ in fact implies that $\dist(I,J)=0$.

\subsection{The term $\Sigma_{\out}$}
For the analysis of $\Sigma_{\out}$, recall the notion of the long distance \cite[Definition~6.3]{NTV:Tb}
\begin{equation*}
  D(I,J):=\ell(I)+\dist(I,J)+\ell(J)
\end{equation*}
as well as the integer-valued function \cite[end of Section~5]{Hytonen:nonhomog}
\begin{equation*}
  \theta(j):=\Big\lceil\frac{j\gamma+r}{1-\gamma}\Big\rceil.
\end{equation*}
Then
\begin{equation*}
  \Sigma_{\out}=\sum_{i=0}^{\infty}\sum_{j=0}^{\infty}\sum_{\substack{\ell(J)=2^i\ell(I)\\ 2^{j}<D(I,J)/\ell(J)\leq 2^{j+1}}}
  =:\sum_{i,j}\sigma_{\out}^{ij}.
\end{equation*}
For $I$ and $J$ appearing in $\sigma_{\out}^{ij}$, using the goodness of $J$, one can readily check \cite[a few lines after (7.5)]{Hytonen:nonhomog}, that $J\subseteq I^{(i+j+\theta(j))}$, where $I^{(k)}$ indicates the $k$ generations older dyadic ancestor of $I$: the unique $I^{(k)}\in\mathscr{D}$ with $I^{(k)}\supseteq I$ and $\ell(I^{(k)})=2^k\ell(I)$. Thus, taking $K:=I^{(i+j+\theta(j))}\in\mathscr{D}^{\beta}$ as a new auxiliary summation variable, one can write
\begin{equation}\label{eq:sigmaOutij}
  \sigma_{\out}^{ij}=\sum_{K\in\mathscr{D}^{\beta}}\sum_{\substack{J\in\mathscr{D}^{\beta}_{\good}; J\subseteq K \\ \ell(J)=2^{-j-\theta(j)}\ell(K)}}\
    \sum_{\substack{I\in\mathscr{D}^{\beta}; I\subseteq K; (I,J)\good \\ \ell(I)=2^{-i-j-\theta(j)}\ell(K)\\ 2^j<D(I,J)/\ell(J)<2^{j+1}}}
    =:\sum_{K\in\mathscr{D}^{\beta}}\sigma_K^{ij}.
\end{equation}

The next task is to check that $\sigma_K^{ij}$ is of the form $\pair{g}{A_K f}$. Recalling the suppressed summands $\pair{\D_J g}{T\D_I f}$ and invoking the Haar functions,
\begin{equation*}
  \sigma_K^{ij}=\sum_{I,J}\sum_{\eta,\theta}\pair{g}{h^{\theta}_J}\cdot\pair{h^{\theta}_J}{Th^{\eta}_I}\cdot\pair{h^{\eta}_I}{f}\cdot\pi_{IJ},
\end{equation*}
where the summation conditions on $I,J$ are as in \eqref{eq:sigmaOutij}, while $\eta,\theta$ run over $\{0,1\}^N\setminus\{0\}$, except possibly when $I\in\mathscr{D}_m$ or $J\in\mathscr{D}_m$ in which case also the noncancellative Haar functions $h^0_I$ or $h^0_J$ are allowed. Also recall that $\pi_{IJ}\in[0,1]$; no further properties of these conditional probabilities will be needed in the treatment of this part of the sum. For the coefficient $\pair{h^{\theta}_J}{Th^{\eta}_I}$, standard kernel estimates and the goodness of $I$ in the case that $\ell(I)<2^{-r}\ell(J)$ give \cite[Lemmas 6.1 and 6.4]{NTV:Tb}
\begin{align*}
  \abs{\pair{h^{\theta}_J}{Th^{\eta}_I}}
  &\lesssim\frac{\ell(I)^{\alpha}}{\dist(I,J)^{N+\alpha}}\Norm{h^{\theta}_J}{1}\Norm{h^{\eta}_I}{1} \\
  &\lesssim\frac{\ell(I)^{\alpha/2}\ell(J)^{\alpha/2}}{D(I,J)^{N+\alpha}}\abs{J}^{1/2}\abs{I}^{1/2} \\
  &\lesssim 2^{-i\alpha/2}2^{-j\alpha+j\gamma N/(1-\gamma)}\frac{\abs{J}^{1/2}\abs{I}^{1/2}}{\abs{K}}.
\end{align*}
The above estimate depends on the fact that the Haar function $h^{\eta}_I$ related to the smaller cube $I$ is a cancellative one. Since the noncancellative Haar functions only appear on generation $m$, the claimed fact could only fail if both $\ell(I)=\ell(J)=2^{-m}$. But one can choose $m$ so small (i.e., large negative) that at most $2^N$ cubes of length $2^{-m}$ intersect the union of the supports of $f$ and $g$. Then all relevant pairs of cubes with $\ell(I)=\ell(J)=2^{-m}$ are less than their common sidelength apart, and hence they will fall into the term $\Sigma_{\near}$.

Writing
\begin{equation*}
  \alpha^{ij\eta\theta}_{IJ}:=2^{i\alpha/2}2^{j[\alpha-\gamma N/(1-\gamma)]}\cdot\pair{h^{\theta}_J}{Th^{\eta}_I}\lesssim \abs{I}^{1/2}\abs{J}^{1/2}/\abs{K},
\end{equation*}
there holds
\begin{equation*}
  \sigma_{\out} =\sum_{i,j=0}^{\infty}2^{-i\alpha/2}2^{-j[\alpha-\gamma N/(1-\gamma)]}\pair{g}{\sha^{ij}_{\out}f},
\end{equation*}
where the promised dyadic shifts $\sha^{ij}_{\out}$ are explicitly given by
\begin{align*}
  \sha^{ij}_{\out}f 
  :=\sum_{K\in\mathscr{D}^{\beta}}
   \sum_{\substack{I\in\mathscr{D}^{\beta},J\in\mathscr{D}^{\beta}_{\good};\ I,J\subseteq K \\ \ell(J)=2^{i}\ell(I)=2^{-j-\theta(j)}\ell(K) \\ 2^j<D(I,J)/\ell(J)\leq 2^{j+1}}} 
      \sum_{\eta,\theta}h^{\theta}_J \alpha^{\eta\theta}_{IJ}\pair{h^{\eta}_I}{f}
   =:\sum_{K\in\mathscr{D}}A_K^{ij}f.
\end{align*}
The persistent summation conditions \eqref{eq:implicitSC} and the goodness of $(I,J)$ may be incorporated by simply defining some of the coefficients $\alpha^{\eta\theta}_{IJ}$ to be zero. From the estimate $\abs{\alpha^{\eta\theta}_{IJ}}\lesssim \abs{I}^{1/2}\abs{J}^{1/2}/\abs{K}$ and the size and support properties of the Haar functions, it follows that $A_K^{ij}$ is an averaging operator,
\begin{equation*}
  A_K^{ij}f(x)=\fint_K a^{ij}_K(x,y)f(y)\ud y,\qquad\Norm{a^{ij}_K}{\infty}\lesssim 1.
\end{equation*}

One needs to check that $\sha^{ij}_{\out}$ is a good shift. If $J\in\mathscr{D}^{\beta}_{\good}$ appears in $A_K$, it is immediate from the goodness of $J$ that $\dist(J,\partial K)\geq\ell(J)^{\gamma}\ell(K)^{1-\gamma}$. For $I$, one can argue as follows:
\begin{align*}
  \dist(I,\partial K)
  \geq\dist(J,\partial K)-D(I,J)
  \geq\ell(J)^{\gamma}\ell(K)^{1-\gamma}-2^{j+1}\ell(J),
\end{align*}
and $\ell(J)=\ell(J)^{\gamma}\ell(J)^{1-\gamma}=\ell(J)^{\gamma}(2^{-j-\theta(j)}\ell(K))^{1-\gamma}$; hence
\begin{align*}
  \dist(I,\partial K) &\geq\ell(J)^{\gamma}\ell(K)^{1-\gamma}\big(1-2^{j+1}2^{-j(1-\gamma)-(j\gamma+r)}\big) \\
  &\geq\ell(J)^{\gamma}\ell(K)^{1-\gamma}\big(1-2^{1-r})\geq\tfrac12\ell(J)^{\gamma}\ell(K)^{1-\gamma},
\end{align*}
and $\ell(J)\geq\ell(I)$. 

Now each individual $\sha^{ij}_{\out}$ is seen to be of the required form, but the parameterisation of the series still different from the one stated in the theorem. Thus, let
\begin{equation*}
  v:=j+\theta(j)=\frac{j}{1-\gamma}+O(1),\qquad u:=i+v
\end{equation*}
so that $\ell(I)=2^{-u}\ell(K)$ and $\ell(J)=2^{-v}\ell(K)$ for all $I,J$ appearing in $A_K$. Then
\begin{equation*}
  2^{-j[\alpha-\gamma N/(1-\gamma)]}\lesssim 2^{-v[\alpha(1-\gamma)-N\gamma]}=2^{-v\alpha/2},
\end{equation*}
and hence
\begin{equation*}
  2^{-i\alpha/2}2^{-j[\alpha-\gamma N/(1-\gamma)]}\lesssim 2^{-(i+v)\alpha/2}=2^{-u\alpha/2}=2^{-\max(u,v)\alpha/2}.
\end{equation*}
This completes the treatment of $\Sigma_{\out}$.

\subsection{The term $\Sigma_{\inside}$} The first basic observation is that the conditions $\dist(I,J)<\ell(I)<2^{-r}\ell(J)$ and the goodness of $I$ imply that in fact $I$ must be fully contained in (and even deep inside) one of the children $J'\in\children(J)$ of $J$. On this set, $\D_J g$ takes a constant value $\ave{\D_J g}_{J'}=\ave{\D_J g}_I$. Then, for $I,J$ appearing in $\Sigma_{\inside}$, a paraproduct can be extracted, as usual,
\begin{align*}
  \pair{\D_J g}{T\D_I f}
  &=\pair{1_{(J')^c}\D_J g}{T\D_I f}+\ave{\D_J g}_{J'}\pair{1_{J'}}{T\D_I f} \\
  &=\pair{1_{(J')^c}(\D_J g-\ave{\D_J g}_{J'})}{T\D_I f}+\ave{\D_J g}_{J'}\pair{1}{T\D_I f} \\
  &=\sum_{\eta,\theta}\pair{g}{h^{\theta}_J}\pair{1_{(J')^c}(h^{\theta}_J-\ave{h^{\theta}_J}_{J'})}{Th^{\eta}_I}\pair{h^{\eta}_I}{f}
      +\ave{\D_J g}_{I}\pair{T^*1}{\D_I f}.
\end{align*}
The coefficients in the first term satisfy (cf. \cite[Lemma 7.3]{NTV:Tb} or \cite[Lemma 8.3]{Hytonen:nonhomog})
\begin{align*}
  \abs{\pair{1_{(J')^c}(h^{\theta}_J-\ave{h^{\theta}_J}_{J'})}{Th^{\eta}_I}}
  &\lesssim\Big(\frac{\ell(I)}{\ell(J)}\Big)^{\alpha/2}\Big(\frac{\Norm{h^{\theta}_J}{1}}{\abs{J}}+\abs{\ave{h^{\theta}_J}_{J'}}\Big)\Norm{h^{\eta}_I}{1} \\
  &\lesssim\Big(\frac{\ell(I)^{\alpha}}{\ell(J)}\Big)^{\alpha/2}\Big(\frac{\abs{I}}{\abs{J}}\Big)^{1/2}=2^{-i\alpha/2}\Big(\frac{\abs{I}}{\abs{J}}\Big)^{1/2}
\end{align*}
for $\ell(I)=2^{-i}\ell(J)$. Altogether then, 
\begin{equation*}
  \Sigma_{\inside}=\sum_{i=r+1}^{\infty}2^{-i\alpha/2}\pair{g}{\sha_{\inside}^i f}+\sum_I \pair{T^*1}{\D_I f}
    \sum_{\substack{J\supset I\\ \ell(J)>2^r\ell(I)}} \ave{\D_J g}_{I}\cdot\pi_{IJ},
\end{equation*}
where the new sequence of dyadic shifts is given by
\begin{align*}
  \sha^i_{\inside}f &=\sum_{J\in\mathscr{D}^{\beta}_{\good}}\sum_{\substack{I\in\mathscr{D}^{\beta},\ I\subset J\\ \ell(I)=2^{-i}\ell(J)}}\sum_{\eta,\theta}
        h^{\theta}_J \alpha^{\eta\theta}_{IJ}\pair{h^{\eta}_I}{f} \\
        &=\sum_{K\in\mathscr{D}^{\beta}}\sum_{\substack{J\in\mathscr{D}^{\beta}_{\good},\ J\subset K\\ \ell(J)=2^{-r}\ell(K)}}
           \sum_{\substack{I\in\mathscr{D}^{\beta},\ I\subset J\\ \ell(I)=2^{-i-r}\ell(K)}}\sum_{\eta,\theta}
        h^{\theta}_J \alpha^{\eta\theta}_{IJ}\pair{h^{\eta}_I}{f}
        =:\sum_{K\in\mathscr{D}^{\beta}}A_K f.
\end{align*}
The middle equality follows by simply introducing the new summation variable $K:=J^{(r)}$. Again, the implicit summation conditions \eqref{eq:implicitSC} are also in force, but may be suppressed by defining some of the $\alpha^{\eta\theta}_{IJ}$ as zero. The coefficients satisfy $\abs{\alpha^{\eta\theta}_{IJ}}\lesssim(\abs{I}/\abs{J})^{1/2}$ which, in combination with the properties of the Haar functions, shows that
\begin{equation*}
  A^i_K f(x)=\fint_K a^i_K(x,y)f(y)\ud y,\qquad\Norm{a^i_K}{\infty}\lesssim 1.
\end{equation*}

It is further clear that $\sha^i_{\inside}$ is a shift with parameters $(u,v)=(i+r,r)$, and $2^{-i\alpha/2}\lesssim 2^{-\max(u,v)\alpha/2}$, since $r$ is a fixed number. The goodness conditions for the shift follow for $J$ directly from the the goodness of $J$, and for $I$ from the fact that $I\subset J$ so that $\dist(I,\partial K)\geq\dist(J,\partial K)\geq\ell(J)^{\gamma}\ell(K)^{1-\gamma}$.

\subsection{The paraproduct}
It is time to treat the part of $\Sigma_{\inside}$ which was left over after the extraction of the shifts $\sha^i_{\inside}$ above. Making the suppressed summation conditions explicit, it is
\begin{equation*}
  \sum_{I\in\mathscr{D}^{\beta}}^n\pair{T^*1}{\D_I f}
  \sum_{\substack{J\in\mathscr{D}_{\good}^{\beta},\ J\supset I \\ \ell(J)>2^r\ell(I)\\ (I,J)\good}}^n\ave{\D_J g}_I\cdot\pi_{IJ},
\end{equation*}
where the conditions that $J\supset I$ and $(I,J)$ be good may as well be dropped from the last sum, since otherwise $\ave{D_J g}_I=0$ or $\pi_{IJ}=0$.  Now I resort to the fact that it is the expectation $\Exp_{\beta}$ of this quantity which ultimately matters, and it is also important to recall the precise definition of the numbers $\pi_{IJ}$. (A predecessor of the following computation is found in \cite[Section 9]{Hytonen:nonhomog}.) Abbreviating temporarily $T_{IJ}:=\pair{T^*1}{\D_I f}\,\ave{\D_J g}_I$, this leads to the expression
\begin{align*}
  &\Exp_{\beta}\sum_{I\in\mathscr{D}^{\beta}}^n\pair{T^*1}{\D_I f}
  \sum_{\substack{J\in\mathscr{D}^{\beta}_{\good}\\ \ell(J)>2^r\ell(I)}}^n\ave{\D_J g}_I\cdot\pi_{IJ} \\
  &=\Exp_{\beta}\sum_{\substack{I,J\in\mathscr{D}^0\\ \ell(J)>2^{r}\ell(I)}}^n
       T_{I\dot+\beta,J\dot+\beta}\,  \Exp_{\beta}[1_{\good(\beta)}(I\dot+\beta)|\beta_j:2^{-j}<\ell(J)]\, 1_{\good(\beta)}(J\dot+\beta) \\ 
  &=\sum_{\substack{I,J\in\mathscr{D}^0\\ \ell(J)>2^{r}\ell(I)}}^n
       \Exp_{\beta}\big[T_{I\dot+\beta,J\dot+\beta} \, \Exp_{\beta}[1_{\good(\beta)}(I\dot+\beta)|\beta_j:2^{-j}<\ell(J)]\big] \,  \Exp_{\beta}[1_{\good(\beta)}(J\dot+\beta)] \\
  &=\sum_{\substack{I,J\in\mathscr{D}^0\\ \ell(J)>2^{r}\ell(I)}}^n
       \Exp_{\beta}\big[T_{I\dot+\beta,J\dot+\beta} \, 1_{\good(\beta)}(I\dot+\beta)\big] \,  \pi_{\good} \\
  &=\pi_{\good}\Exp_{\beta}\sum_{I\in\mathscr{D}^{\beta}_{\good}}^n \pair{T^*1}{\D_I f}
       \sum_{\substack{J\in\mathscr{D}^{\beta},\ J\supset I\\ \ell(J)>2^{r}\ell(I)}}^n\ave{\D_J g}_I,
\end{align*}
where the natural condition that $J\supset I$ was reimposed to avoid unnecessary zeros in the sum.
 
In the inner sum, $\ave{\D_J g}_I=\ave{g}_{J'}-\ave{g}_J$, where $I\supset J'\in\children(J)$ and $\ell(J)<2^{-m}$. Recalling the abuse of notation when $\ell(J)=2^{-m}$, when $\D_J$ in fact stands for $\D_J+\Exp_J$, there holds $\ave{(\D_J+\Exp_J) g}_I=\ave{g}_{J'}$ in this case. Thus the summation over $J$ (if nonempty) is telescopic, and collapses to $\ave{g}_{I^{(r)}}$. For simplicity of notation, let $\ave{g}_{J}$ be abuse notation for zero in the case of an empty sum, i.e., when $\ell(J)\geq 2^{-m}$. After collapsing the telescope as explained, the computation continues by essentially reversing what was done above, but with the collapsed double sum: (A useful temporary abbreviation now is $T_{IJ}:=\pair{T^*1}{\D_I f}\,\ave{g}_{J}\,1_{\children^r(J)}(I)$, where the last factor is one if and only if $I\subset J$ with $\ell(I)=2^{-r}\ell(J)$.)
\begin{align*}
  &=\pi_{\good}\Exp_{\beta}\sum_{I\in\mathscr{D}^{\beta}_{\good}}^n\pair{T^*1}{\D_I f}\ave{g}_{I^{(r)}} \\
  &=\pi_{\good}\Exp_{\beta}\sum_{J\in\mathscr{D}^{\beta}}^n\ave{g}_J
    \sum_{\substack{I\in\mathscr{D}^{\beta}_{\good}, I\subset J\\ \ell(I)=2^{-r}\ell(J)}}^n\pair{T^*1}{\D_I f} \\
  &=\pi_{\good}\sum_{\substack{I,J\in\mathscr{D}^{0}\\ \ell(J)=2^{r}\ell(I)}}^n
       \Exp_{\beta}[T_{I\dot+\beta,J\dot+\beta}\, 1_{\good(\beta)}(I\dot+\beta)] \\
  &=\sum_{\substack{I,J\in\mathscr{D}^{0}\\ \ell(J)=2^{r}\ell(I)}}^n
       \Exp_{\beta}\big[T_{I\dot+\beta,J\dot+\beta}\,
           \Exp_{\beta}[1_{\good(\beta)}(I\dot+\beta)|\beta_j:2^{-j}<\ell(J)]\big]\,\Exp_{\beta}[1_{\good(\beta)}(J\dot+\beta)] \\
  &=\Exp_{\beta}\sum_{\substack{I,J\in\mathscr{D}^{0}\\ \ell(J)=2^{r}\ell(I)}}^n
       T_{I\dot+\beta,J\dot+\beta}\,\pi_{I\dot+\beta,\ell(J)}^{\beta}\,1_{\good(\beta)}(J\dot+\beta) \\
  &=\Exp_{\beta}\sum_{J\in\mathscr{D}^{\beta}_{\good}}^n\sum_{\substack{I\in\mathscr{D}^{\beta}, I\subset J \\ \ell(I)=2^{-r}\ell(J)}}^n
        \ave{g}_J\cdot\pair{T^*1}{\D_I f}\cdot\pi_{IJ}.
\end{align*}
In order to interpret this as an average of good dyadic shifts, one still needs to introduce the new summation variable $K:=J^{(r)}$, leading to
\begin{align*}
  &=\Exp_{\beta}\sum_{K\in\mathscr{D}^{\beta}}\sum_{\substack{J\in\mathscr{D}^{\beta}_{\good}, J\subset K\\ \ell(J)=2^{-r}\ell(K)}}^n
       \sum_{\substack{I\in\mathscr{D}^{\beta}, I\subset J \\ \ell(I)=2^{-2r}\ell(K)}}^n
        \ave{g}_J\cdot\pair{T^*1}{\D_I f}\cdot\pi_{IJ} \\
    &=:\Exp_{\beta}\Bpair{g}{\sum_{K\in\mathscr{D}^{\beta}}A_K f}=:\Exp_{\beta}\pair{g}{\Pi^*f},
\end{align*}
where $\Pi^*$ is a dual paraproduct operator. Note that the kernel of $A_K f(x)=\fint_K a_K(x,y)f(y)\ud y$ is
\begin{equation*}
  a_K(x,y)=\abs{K}\sum_{I,J}\sum_{\eta}\frac{1_J(x)}{\abs{J}}\cdot\pair{T^*1}{h^{\eta}_I}\cdot\pi_{IJ}\cdot h^{\eta}_I(y),
\end{equation*}
where the summation conditions are the same as above, and $\abs{\pair{T^*1}{h^{\eta}_I}}\lesssim\abs{I}^{1/2}$ since $T^*1\in\BMO$. As $\abs{K}/\abs{J}=2^{r}$, it follows that $\Norm{a_K}{\infty}\lesssim 1$, as required. Also, the goodness of $J$ ensures that $\dist(J,\partial K)\geq\ell(J)^{\gamma}\ell(K)^{1-\gamma}$, and the same estimate follows for $I$ simply because $I\subset J$. This completes the verification that $\Pi^*$ is a good dyadic shift with parameters $(u,v)=(2r,r)$.

\subsection{The term $\Sigma_{\near}$}
Here the summation conditions are $2^{-r}\ell(J)\leq\ell(I)\leq\ell(J)$ and $\dist(I,J)<\ell(I)$, which implies that in fact $\dist(I,J)=0$. Splitting the sum according to the value of $i=0,1,\ldots,r$ such that $\ell(I)=2^{-i}\ell(J)$, the goodness of $J$ implies that $J\subset K:=I^{(r+i)}$, which can be taken as a new summation variable.
\begin{equation*}
  \Sigma_{\near}=\sum_{i=0}^r\pair{g}{\sha_{\near}^i f},\qquad \sha_{\near}^i f:=\sum_{K\in\mathscr{D}'}A^i_K f,
\end{equation*}
where
\begin{align*}
   A_K^i f:=\sum_{\substack{J\in\mathscr{D}^{\beta}_{\good};\ J\subset K\\ \ell(J)=2^{-r}\ell(K)}}\
      \sum_{\substack{I\in\mathscr{D};\ I\subset K\\ \ell(I)=2^{-r-i}\ell(K)}}\sum_{\eta,\theta}
      h^{\theta}_J \alpha^{\eta\theta}_{IJ}\pair{h^{\eta}_I}{f}
\end{align*}
and, simply by the boundedness of $T$ on $L^2(\R^N)$,
\begin{equation*}
  \abs{\alpha^{\eta\theta}_{IJ}}
  =\abs{\pair{h^{\theta}_J}{Th^{\eta}_I}\cdot\pi_{IJ}}\lesssim\Norm{h^{\theta}_J}{2}\Norm{h^{\eta}_I}{2}=1.
\end{equation*}
Using the size of the Haar functions and the fact that both $I$ and $J$ are essentially of the same size as $K$, it follows that $A_K$ has the right size.

The goodness of $J$ implies that $\dist(J,\partial K)\geq\ell(J)^{\gamma}\ell(K)^{1-\gamma}$ and, using that $\dist(I,J)=0$,
\begin{align*}
  \dist(I,\partial K)
  &\geq\dist(J,\partial K)-\ell(J) \\
  &\geq \ell(J)^{\gamma}\ell(K)^{1-\gamma}(1-2^{-r(1-\gamma)})\geq\tfrac12\ell(J)^{\gamma}\ell(K)^{1-\gamma}.
\end{align*}
Thus $\sha^i_{\near}$ is a good dyadic shift with parameters $(u,v)=(r+i,r)$.

\subsection{Completion of the decomposition}
In the part of the sum martingale difference representation with $\ell(I)>\ell(J)$, one can perform completely analogous considerations as above on the dual side, leading to a series of pairings $\pair{\sha g}{f}$, where $\sha$ is a good dyadic shift. However, the definition of a good shift is self-dual, in the sense that $\sha^*$ satisfies all the conditions if and only if $\sha$ does. Hence, simply writing $\pair{\sha g}{f}=\pair{g}{\sha^* f}$ in each summand of the dual series, even this part attains the required form. As a curiosity, it may be observed that the part with $\ell(I)>\ell(J)$ gives shifts with parameters $(u,v)$ such that $u<v$, whereas $\ell(I)\leq\ell(J)$ gave $u\geq v$. Indeed, the adjoint of a shift with parameters $(u,v)$ is a shift with parameters $(v,u)$.

Theorem~\ref{thm:repShifts} still claims the finiteness and the uniform boundedness of all the appearing shifts $\sha$. The finiteness is clear from the fact that these shifts are constructed by reorganising the finite sums $\displaystyle\sum_{I,J\in\mathscr{D}^{\beta}}^n$ from the martingale difference representation. Concerning the uniform boundedness on (the unweighted!) $L^2$, this may be easily extracted from Nazarov, Treil and Volberg's proof of the nonhomogeneous $Tb$ theorem \cite{NTV:Tb}, in which this decomposition is implicitly performed. It is also not difficult to give a direct proof in the present homogeneous situation; however, somewhat different considerations are required for the cancellative shifts, which involve the noncancellative Haar functions on at most one level, and the paraproducts, where noncancellative Haar functions are present on all length-scales. But once this unweighted boundedness is known, the weighted estimates for the different shifts can be established in a uniform manner, without distinguishing the paraproducts from the other kinds of shifts. The proof of Theorem~\ref{thm:repShifts} is complete.

\section{Unweighted end-point estimate for the shifts}

The basic unweighted estimate for the dyadic shifts is the uniform (in the shift parameters) boundedness on $L^2$, which was made a part of Definition~\ref{def:shifts} above.
The next step is proving appropriate weak-type bounds in $L^1$. This is the same general strategy as in Lacey--Petermichl--Reguera \cite{LPR}; the novelty consists of improving the exponential dependence on the shift parameters to a linear one.

\begin{proposition}
A bounded dyadic shift with parameters $(u,v)$ maps $L^1$ into $L^{1,\infty}$ with norm $O(u)$.
\end{proposition}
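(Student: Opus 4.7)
The plan is to perform a standard dyadic Calder\'on--Zygmund decomposition of $f$ at height $\lambda$ using the cubes of $\mathscr{D}$, combine it with the hypothesized $L^2$ boundedness of $\sha$ for the good part, and exploit the Haar structure of the shift to show that the bad part involves only $O(u)$ nontrivial ``upper'' scales per stopping cube. Concretely, I would decompose $f=g+\sum_j b_j$, where the $Q_j$ are the maximal cubes of $\mathscr{D}$ with $\fint_{Q_j}\abs{f}\ud x>\lambda$, $b_j:=(f-\ave{f}_{Q_j})1_{Q_j}$, $\Norm{g}{\infty}\lesssim\lambda$, and $\sum_j\abs{Q_j}\leq\Norm{f}{1}/\lambda$. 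For the good part, $\Norm{g}{2}^2\leq\Norm{g}{\infty}\Norm{g}{1}\lesssim\lambda\Norm{f}{1}$ and Chebyshev yield $\abs{\{\abs{\sha g}>\lambda/2\}}\lesssim\Norm{f}{1}/\lambda$.

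The crucial structural observation for the bad part is that each cancellative Haar function $h^\eta_I$ is constant on the children of $I$, so for $I\supsetneq Q_j$ it is constant on $Q_j$, and $\pair{h^\eta_I}{b_j}$ vanishes by the mean-zero property of $b_j$; the noncancellative case $h^0_I=\abs{I}^{-1/2}1_I$ also yields zero for $I\supseteq Q_j$ via the same mean-zero fact. Since the kernel $a_K$ couples $h^\eta_I$ only with $I\subseteq K$ of side $\ell(I)=2^{-u}\ell(K)$, this forces $A_K b_j=0$ unless either $K\supseteq Q_j$ with $\ell(K)\leq 2^u\ell(Q_j)$, leaving only the $u+1$ ancestors $Q_j^{(k)}$ for $k=0,1,\ldots,u$, or else $K\subsetneq Q_j$, in which case $A_K b_j$ is automatically supported inside $Q_j$. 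Split accordingly as $\sha b=Sb+Tb$ with $Sb:=\sum_j\sum_{k=0}^{u}A_{Q_j^{(k)}}b_j$ and $Tb:=\sum_j\sum_{K\subsetneq Q_j}A_K b_j$. The support of $Tb$ lies in $\bigcup_j Q_j$, of total measure at most $\Norm{f}{1}/\lambda$. For $Sb$, the uniform bound $\Norm{a_K}{\infty}\lesssim 1$ together with the fact that $a_K$ is supported in $K\times K$ gives the routine estimate $\Norm{A_K b_j}{1}\lesssim\Norm{b_j}{1}$; summing over the $u+1$ admissible ancestors and then in $j$ yields $\Norm{Sb}{1}\lesssim u\Norm{f}{1}$, whence $\abs{\{\abs{Sb}>\lambda/4\}}\lesssim u\Norm{f}{1}/\lambda$ by Chebyshev. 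Combining the three contributions gives $\abs{\{\abs{\sha f}>\lambda\}}\lesssim u\Norm{f}{1}/\lambda$.

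The main obstacle, and the whole point of the proposition, is precisely the observation that only $O(u)$ scales of $K$ above each $Q_j$ contribute; the naive argument of discarding the entire $u$-th ancestor $Q_j^{(u)}$ wastes the Haar cancellation and would produce a bound of order $2^{uN}$, the exponential dependence obtained in Lacey--Petermichl--Reguera \cite{LPR}. The improvement therefore amounts to identifying, for every intermediate scale above $Q_j$, the mean-zero cancellation of $b_j$ against $h^\eta_I$, and concluding that at most $u+1$ ancestor cubes of $Q_j$ can produce a nontrivial contribution to $\sha b_j$.
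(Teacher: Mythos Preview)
Your proposal is correct and follows essentially the same approach as the paper. Both proofs run the dyadic Calder\'on--Zygmund decomposition, handle the good part via $L^2$ boundedness, and for the bad part observe that only the ancestors $Q_j^{(k)}$ with $0\leq k\leq u$ can give a nonzero $A_K b_j$ while the remaining contributions are supported in $\bigcup_j Q_j$; the paper phrases the key cancellation as ``$a_K(x,\cdot)$ is constant on cubes of length $\ell(Q_j)$ when $\ell(K)>2^u\ell(Q_j)$,'' which is exactly your Haar-pairing observation rewritten at the kernel level.
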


\begin{proof}
This is a rather classical-style argument based on the Calder\'on--Zygmund decomposition. Given $f\in L^1(\R^N)$, let $g$ and $b$ be its good and bad parts with respect to height $\lambda$ and the dyadic system $\mathscr{D}$ related to the particular shift; i.e., $\displaystyle b=f-g=\sum_{L\in\mathscr{B}}b_L$ with $b_L:=1_L(f-\ave{f}_L)$, where $L\in\mathscr{B}\subset\mathscr{D}$ are the maximal dyadic cubes with $\fint_L \abs{f}\ud x>\lambda$. As usual
\begin{align*}
  \abs{\{\abs{\sha f}>\lambda\}}
  &\leq\abs{\{\abs{\sha g}>\tfrac12\lambda\}}+\abs{\{\abs{\sha b}>\tfrac12\lambda\}}, \\
  \abs{\{\abs{\sha g}>\tfrac12\lambda\}}
  &\leq 4\lambda^{-2}\Norm{\sha g}{2}^2\lesssim\lambda^{-2}\Norm{g}{2}^2\lesssim\lambda^{-1}\Norm{f}{1},
\end{align*}
and
\begin{equation*}
  \sha b=\sum_L\sha b_L=\sum_L\sum_K A_K b_L.
\end{equation*}
A necessary condition for $A_K b_L\neq 0$ is $K\cap L\neq\varnothing$, which means that $K\subseteq L$ or $K\supset L$. But, if $\ell(K)>2^u\ell(L)$, then the kernel $a_K(x,y)$ of $A_K$, as a function of $y$, is constant on all $I\in\mathscr{D}$ with $\ell(I)=\ell(L)$, and in particular on $L$. Since $\int b_L=0$, it follows that $A_K b_L=0$ also in this case. Thus
\begin{equation*}
  \sum_K A_K b_L=\sum_{K\subseteq L}A_K b_L+\sum_{i=1}^u A_{L^{(i)}} b_L.
\end{equation*}
The first sum is supported on $L$, and the second contains just $u$ summands. Hence
\begin{align*}
  \abs{\{\abs{\sha b}>\tfrac12\lambda\}}
  \leq\Babs{\bigcup_{L\in\mathscr{B}}L}
  +\Babs{\Big\{\Babs{\sum_{L\in\mathscr{B}}\sum_{i=1}^uA_{L^{(i)}} b_L}>\tfrac12\lambda\Big\}},
\end{align*}
where the first term in bounded in the standard way by $\displaystyle\sum_{L\in\mathscr{B}}\abs{L}\lesssim\lambda^{-1}\Norm{f}{1}$.

The second term is estimated as follows:
\begin{align*}
  &\Babs{\Big\{\Babs{\sum_{L\in\mathscr{B}}\sum_{i=1}^u A_{L^{(i)}} b_L}>\tfrac12\lambda\Big\}} \\
  &\leq\frac{2}{\lambda}\BNorm{\sum_{L\in\mathscr{B}}\sum_{i=1}^u A_{L^{(i)}} b_L}{1} 
     \leq\frac{2}{\lambda}\sum_{L\in\mathscr{B}}\sum_{i=1}^u \Norm{A_{L^{(i)}} b_L}{1} \\
  &\lesssim\frac{1}{\lambda}\sum_{L\in\mathscr{B}} \sum_{i=1}^u \Norm{b_L}{1}
     \leq\frac{u}{\lambda}\sum_{L\in\mathscr{B}}\Norm{b_L}{1}
     \lesssim\frac{u}{\lambda}\Norm{f}{1},
\end{align*}
where the uniform $L^1$-boundedness of the averaging operators $A_K$ was used in the third-to-last step.
\end{proof}

\section{The weighted testing conditions in terms of shifts}

It was explained in the Introduction that the P\'erez--Treil--Volberg result \cite{PTV} reduced the proof of the $A_2$ conjecture to the verification of the testing condition
\begin{equation*}
  \Norm{T(w1_Q)}{L^2(\sigma)}\lesssim\Norm{w}{A_2}w(Q)^{1/2}
\end{equation*}
for all cubes $Q\subset\R^N$. The left side is the supremum over all normalised, compactly supported (thanks to density) $f\in L^2(w)$ of
\begin{align*}
  \pair{f}{T(w1_Q)}
  =\lim_{n\to\infty}\Exp_{\beta}\sum_{u,v=r}^{\infty} 2^{-\max(u,v)\alpha/2}\pair{f}{\sha^{uv}_{n\beta}(w1_Q)}.
\end{align*}
Therefore, it suffices to prove the corresponding testing estimate
\begin{equation*}
  \Norm{\sha^{uv}_{n\beta}(w1_Q)}{L^2(\sigma)}\lesssim \Phi(u,v)\Norm{w}{A_2}w(Q)^{1/2},
\end{equation*}
with some $\Phi(u,v)$ such that the series $\sum_{u,v=r}^{\infty}2^{-\max(u,v)\alpha/2}\Phi(u,v)$ is summable. Note that the cube $Q$ in this testing condition is completely arbitrary; it does not in general belong to the (also arbitrary) dyadic systems appearing in the definition of the dyadic shift.

The rest of the paper is dedicated to proving the following estimate, from which the required summability follows (thanks to $\alpha/2-\gamma N/2>\alpha/4>0$), thereby verifying the $A_2$ conjecture.

\begin{theorem}\label{thm:mainEst}
Let $\displaystyle\sha=\sum_{K\in\mathscr{D}}A_K$ be a good, finite, bounded dyadic shift with parameters $(u,v)$. Then
\begin{equation*}
   \Norm{\sha(w1_Q)}{L^2(\sigma)}\lesssim 2^{\max(u,v)\gamma N/2}uv\Norm{w}{A_2}w(Q)^{1/2}
\end{equation*}
for all cubes $Q\subset\R^N$. (The exponential factor is unnecessary if $Q\in\mathscr{D}$.)
\end{theorem}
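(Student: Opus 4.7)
My plan is to follow the Lacey--Petermichl--Reguera approach but refine their key estimates to obtain a polynomial, rather than exponential, dependence on $u$ and $v$, mirroring the improvement already obtained for the weak-type bound in the previous section.

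\textbf{Reduction to $Q\in\mathscr{D}$.} For an arbitrary cube $Q$, the goodness of $\sha$ forces only cubes $I,J$ well-separated from $\partial K$ to enter $A_K$. A standard geometric decomposition of $1_Q$ in terms of characteristic functions of dyadic cubes, together with the boundary loss controlled by the goodness parameter $\gamma$, produces the extra factor $2^{\max(u,v)\gamma N/2}$ and reduces the problem to the dyadic case. I therefore assume $Q\in\mathscr{D}$ in what follows.

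\textbf{Duality and coarse splitting.} By duality, it suffices to bound $|\pair{\sha(w1_Q)}{g\sigma}|\lesssim uv\Norm{w}{A_2}w(Q)^{1/2}\Norm{g}{L^2(\sigma)}$. I split $\sha=\sum_K A_K$ according to whether $K\cap Q=\varnothing$ (which vanishes), $K\supseteq Q$, or $K\subsetneq Q$. Among $K\supseteq Q$, only those with $\ell(K)\leq 2^u\ell(Q)$ contribute nontrivially: for larger $K$, the kernel $a_K(x,y)$ is constant in $y$ across the cube $I\supseteq Q$ of size $2^{-u}\ell(K)$, and Haar orthogonality annihilates the pairing (this is exactly the cancellation used in the weak-type proof). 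A direct kernel bound combined with $\ave{w}_K\ave{\sigma}_K\leq\Norm{w}{A_2}$ controls each of these $O(u)$ ancestor terms by $\Norm{w}{A_2}w(Q)^{1/2}\Norm{g}{L^2(\sigma)}$.

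\textbf{Main term via two-sided coronas.} The bulk of the work is for $K\subsetneq Q$. I construct principal cubes $\{P\}\subset\mathscr{D}(Q)$ by a stopping-time where $\ave{w}_P$ roughly doubles on descent, and dually a family $\{R\}$ for $\ave{|g|}_{\cdot,\sigma}$, so that inside each bicorona the two averages are essentially constant and $\ave{w}_P\ave{\sigma}_P\leq\Norm{w}{A_2}$ holds on the top. Each $K\subsetneq Q$ is assigned to a unique pair $(P,R)$. Within a bicorona, the local bilinear contribution is controlled by unweighted $L^2$-boundedness of the restricted shift together with the pointwise $A_2$ bound; summation across bicoronas is then carried out via the Carleson packing property $\sum_{P'\subseteq P}w(P')\lesssim w(P)$, together with its $\sigma$-analogue for $\{R\}$, followed by Cauchy--Schwarz on the resulting bilinear sum.

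\textbf{Where the $uv$ factor enters, and the main obstacle.} An input Haar coefficient feeding $A_K$ lives at a cube $I$ with $\ell(I)=2^{-u}\ell(K)$, and so can ``cross'' as many as $u$ principal cubes before registering in the stopping time; symmetrically, an output Haar coefficient at $J$ with $\ell(J)=2^{-v}\ell(K)$ introduces a factor $v$ on the dual side. In the LPR argument this crossing is absorbed by a geometric sum that costs $2^u$ (respectively $2^v$), and the main obstacle here is to replace that $\ell^1$-type summation by an $\ell^2$-type one exploiting Haar orthogonality across cubes of a fixed generation, trading an exponential for a polynomial factor on each side. A separate subtlety is the paraproduct-type shifts (with non-cancellative Haar functions on all scales), where this orthogonality is partly lost; their contribution has to be estimated via a $\BMO$-type bound for $T^*1$, which the representation in Theorem~\ref{thm:repShifts} has already supplied.
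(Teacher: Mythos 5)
Your proposal diverges from the paper's proof in several places, and at least one step is simply incorrect.

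The most concrete problem is your treatment of the large scales. You claim that for $K\supseteq Q$ with $\ell(K)>2^{u}\ell(Q)$ the kernel $a_K(x,y)$ is constant in $y$ on $Q$ and ``Haar orthogonality annihilates the pairing.'' That cancellation was available in the weak-type proof because the bad parts $b_L$ of the Calder\'on--Zygmund decomposition have mean zero on $L$. Here the input is $w1_Q$, which has strictly positive integral, so $\int_Q h_I^{\eta}\,w\ud y\neq 0$ in general and the corresponding $A_K(w1_Q)$ does not vanish. No cancellation is available; instead, the paper bounds the whole large-scale sum $\sum_{\ell(K)\geq\ell(Q)}A_K(w1_Q)$ pointwise by $w(Q)/\abs{Q}\cdot 1_{3Q}+M(w1_Q)1_{(3Q)^c}$, which is summable over scales for trivial geometric reasons and is then controlled in $L^2(\sigma)$ by Buckley's bound for $M$. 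This part is in fact uniform in $(u,v)$.

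The initial reduction to $Q\in\mathscr{D}$ by ``decomposing $1_Q$ in terms of dyadic indicators'' is also not how the exponential factor arises, and I don't see that such a decomposition interacts usefully with the goodness of the shift. The paper keeps $Q$ arbitrary throughout, and the factor $2^{\max(u,v)\gamma N}$ appears only at the very last step (Lemma~\ref{lem:sumStopping}), where goodness of the shift guarantees that every $K$ with $\hat{K}\cap Q\neq\varnothing$ and $\ell(K)<\ell(Q)$ satisfies $\abs{K\cap Q}\gtrsim 2^{-\max(u,v)\gamma N}\abs{K}$, which lets one replace $\abs{S}$ by $\abs{S\cap Q}$ and close a Carleson-type estimate with $M(w1_Q)$.

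Finally, your ``bicorona'' sketch (principal cubes for $\ave{w}$ and a dual family for $\ave{\abs{g}}_{\sigma}$) is a genuinely different organization from the paper's, which avoids duality entirely and estimates $\Norm{\sha_{\mathscr{K}}(w1_Q)}{L^2(\sigma)}$ directly using a single corona in the density $w(K\cap Q)/\abs{K}$. More importantly, the mechanism you propose for obtaining the $uv$ factor --- replacing an $\ell^1$ sum across generations by an $\ell^2$ sum via Haar orthogonality --- is not what makes the argument work, and as stated it is not a proof. In the paper, the factor $v$ comes from splitting $\mathscr{K}$ into $v+1$ residue classes by $\log_2\ell(K)\bmod(v+1)$, which forces $A_K(w1_Q)$ to be constant on every smaller $K'$ in the same subcollection and enables the iterative (John--Nirenberg-type) good-$\lambda$ argument; the factor $u$ then enters through the $O(u)$ weak-$(1,1)$ bound for shifts proved in the previous section, which drives the geometric decay in that iteration. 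You should identify those two mechanisms explicitly rather than gesture at ``exploiting orthogonality,'' and you should not reduce to dyadic $Q$ before the stopping-time construction.
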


As before, $A_K(w1_Q)$ can only be nonzero if $K\cap Q\neq\varnothing$, and therefore
\begin{equation*}
  \sha(w1_Q)=\sum_{K:K\cap Q\neq\varnothing}A_K(w1_Q)
  =\sum_{\substack{K:K\cap Q\neq\varnothing\\ \ell(K)\geq\ell(Q)}}A_K(w1_Q)+\sum_{\substack{K:K\cap Q\neq\varnothing\\ \ell(K)<\ell(Q)}}A_K(w1_Q).
\end{equation*}
The large scales is by far the easier part of the estimate, and in fact uniform with respect to the shift parameters:
\begin{align*}
  \Babs{\sum_{k=0}^{\infty}\sum_{\substack{K:K\cap Q\neq\varnothing\\ \ell(K)=2^k\ell(Q)}}A_K(w1_Q)}
  &\lesssim\sum_{k=0}^{\infty}\sum_{\substack{K:K\cap Q\neq\varnothing\\ \ell(K)=2^k\ell(Q)}}\frac{w(Q)}{\abs{K}}1_K \\
  &\lesssim\frac{w(Q)}{\abs{Q}}1_{3Q}+M(w1_Q) 1_{(3Q)^c}.
\end{align*}
For the first term on the right,
\begin{align*}
  \BNorm{\frac{w(Q)}{\abs{Q}}1_{3Q}}{L^2(\sigma)}
  =\frac{w(Q)}{\abs{Q}}\sigma(3Q)^{1/2}
  &\lesssim w(Q)^{1/2}\Big(\frac{w(3Q)}{\abs{3Q}} \frac{\sigma(3Q)}{\abs{3Q}}\Big)^{1/2} \\
  &\leq\Norm{w}{A_2}^{1/2}w(Q)^{1/2}.
\end{align*}
And for the second, as a direct application of Buckley's estimate \cite[Theorem~2.5]{Buckley}
\begin{equation}\label{eq:Buckley}
  \Norm{Mf}{L^2(w)}\lesssim\Norm{w}{A_2}\Norm{f}{L^2(w)},
\end{equation}
it follows that
\begin{align*}
  \Norm{M(w1_Q)}{L^2(\sigma)}
  \lesssim\Norm{\sigma}{A_2}\Norm{w1_Q}{L^2(\sigma)}=\Norm{w}{A_2} w(Q)^{1/2}.
\end{align*}

The main part of the argument consists of handling the small scales.

\section{The main estimates}

This section contains the core inequalities behind the $A_2$ conjecture. They follow quite closely the innovative estimates originally due to Lacey, Petermichl and Reguera \cite{LPR}, which gave the analogue of the $A_2$ conjecture for individual dyadic shifts. However, in order to obtain bounds with admissible dependence on the shift parameters, a number of modifications are needed here and there, so it seems appropriate to present the argument in full detail. It is also worth recalling the additional difficulty here that the cube $Q$ need not be dyadic; this is to some extent compensated by goodness of the shift under consideration, as will be apparent in the very last Lemma~\ref{lem:sumStopping} below.

With the dyadic shift of interest, $\displaystyle\sha=\sum_{K\in\mathscr{D}}A_K$, fixed for the moment, let
\begin{equation*}
  \sha_{\mathscr{C}}:=\sum_{K\in\mathscr{C}}A_K,
\end{equation*}
whenever $\mathscr{C}\subset\mathscr{D}$ is a subset. With this notation, the goal is to estimate
\begin{equation*}
  \sha_{\{K\in\mathscr{D}:K\cap Q\neq\varnothing,\ell(K)<\ell(Q)\}}(w1_Q).
\end{equation*}
In fact, since $\sha$ is good, which means that the kernel of each $A_K$ is supported only on the subset
\begin{equation*}
  \hat{K}:=\{x\in K:\dist(x,\partial K)\geq 2^{-\max(u,v)\gamma}\ell(K)\},
\end{equation*}
the condition that $A_K(w1_Q)\neq 0$ implies that even $\hat{K}\cap Q\neq 0$. Letting
\begin{equation*}
  \mathscr{K}:=\{K\in\mathscr{D}:\hat{K}\cap Q\neq\varnothing, \ell(K)<\ell(Q)\},
\end{equation*}
the task is reduced to proving that
\begin{equation}\label{eq:mainToProve}
  \Norm{\sha_{\mathscr{K}}(w1_Q)}{L^2(\sigma)}\lesssim 2^{\max(u,v)\gamma N}uv\Norm{w}{A_2}w(Q)^{1/2}.
\end{equation}

\subsection{Pigeonholing \`a la Lacey et al.}
The bound~\eqref{eq:mainToProve} will be accomplished by carefully partitioning the collection $\mathscr{K}$ into appropriate subsets, where the weights $w$ and $\sigma$ are well under control --- a procedure introduced by Lacey, Petermichl and Reguera \cite{LPR}. This consists of several steps:
\begin{enumerate}
  \item The collection $\mathscr{K}$ is partitioned into $v+1$ subcollections simply according to the value of $\log_2\ell(K)\mod v+1$. This is the step which introduces the factor $v$ into the estimate. Henceforth, an arbitrary but fixed subcollection like this will be considered, and with slight abuse still denoted by $\mathscr{K}$. Note that $A_K(w1_Q)$, which is a linear combination of Haar functions on cubes $J\in\mathscr{D}$ with $\ell(J)=2^{-v}\ell(K)$, is constant on dyadic cubes of length $2^{-v-1}\ell(K)$, and hence on all cubes $K'\in\mathscr{K}$ with $\ell(K')<\ell(K)$.

  \item The local $A_2$ characteristic is essentially fixed by considering the subsets $\mathscr{K}^a$ of those $K\in\mathscr{K}$ with
  \begin{equation*}
  2^{a}<\frac{w(K\cap Q)}{\abs{K}}\cdot\frac{\sigma(K)}{\abs{K}}\leq 2^{a+1},
\end{equation*}
where $a\in\Z$ with $a\leq \log_2\Norm{w}{A_2}$.

  \item Among each $\mathscr{K}^a$, a subset of stopping cubes $\mathscr{S}^a=\bigcup_{k=0}^{\infty}\mathscr{S}^a_k$ is constructed as follows: $\mathscr{S}^a_0$ consists of all maximal (with respect to set inclusion) $K\in\mathscr{K}^a$, and then inductively $\mathscr{S}^a_{k+1}$ consists of all maximal $K\in\mathscr{K}^a$ such that
  \begin{equation*}
  \frac{w(K\cap Q)}{\abs{K}}>4\frac{w(S\cap Q)}{\abs{S}}
\end{equation*}
for some $S\in\mathscr{S}^a_k$ with $S\supset K$. For $K\in\mathscr{K}^a$, let $K^s$ stand for the minimal stopping cube $S\in\mathscr{S}^a$ with $S\supseteq K$. Then the collections
\begin{equation*}
  \mathscr{K}^a(S):=\{K\in\mathscr{K}^a: K^s=S\},\qquad S\in\mathscr{S}^a,
\end{equation*}
form a partition of $\mathscr{K}^a$. (Constructions of this type are known in the literature under different names, including ``principal cubes'' and ``corona decompositions.'')

  \item Finally, yet another measure ratio is essentially fixed by considering the subcollections $\mathscr{K}^a_b(S)$ of those $K\in\mathscr{K}^a(S)$ with
  \begin{equation*}
  2^{1-b}\frac{w(S\cap Q)}{\abs{S}}<\frac{w(K\cap Q)}{\abs{K}}\leq 2^{2-b}\frac{w(S\cap Q)}{\abs{S}},\qquad b\in\N.
\end{equation*}
Note that for $K\in\mathscr{K}^a_b(S)$, there holds
\begin{equation*}
   \frac{\sigma(K)}{\abs{K}}\eqsim 2^a\frac{\abs{K}}{w(K\cap Q)}\eqsim 2^{a+b}\frac{\abs{S}}{w(S\cap Q)}=:\tau^a_b(S),
\end{equation*}
so the $\sigma$ and Lebesgue measures are essentially comparable, with their ratio depending only on $a,b$ and $S$.
\end{enumerate}

The proof of \eqref{eq:mainToProve} then starts by writing
\begin{align*}
  &\Norm{\sha_{\mathscr{K}}(w1_Q)}{L^2(\sigma)} \\
  &\leq\sum_{a:2^a\leq\Norm{w}{A_2}}\Big(\int\Babs{\sum_{S\in\mathscr{S}^a}\sha_{\mathscr{K}^a(S)}(w1_Q)}^2\sigma \Big)^{1/2} \\
  &\leq \sum_{a:2^a\leq\Norm{w}{A_2}}\Big(\sum_{S\in\mathscr{S}^a}\int\abs{\sha_{\mathscr{K}^a(S)}(w1_Q)}^2\sigma \\
  &\phantom{\leq \sum_{a:2^a\leq\Norm{w}{A_2}}\Big(}+2\sum_{S\in\mathscr{S}^a}\sum_{\substack{S'\in\mathscr{S}^a\\ S'\subset S}}
        \int\abs{\sha_{\mathscr{K}^a(S)}(w1_Q)}\cdot\abs{\sha_{\mathscr{K}^a(S')}(w1_Q)}\sigma\Big)^{1/2}.
\end{align*}
It is further observed that all $K\in\mathscr{K}^a(S)$ are either disjoint from or strictly containing any $S'\in\mathscr{S}^a$ with $S'\subset S$; hence all these $A_K(w1_Q)$, and thus $\sha_{\mathscr{K}^a(S)}(w1_Q)$ itself, are constant on $S'$. Thus
\begin{align*}
   \int &\abs{\sha_{\mathscr{K}^a(S)}(w1_Q)}\cdot\abs{\sha_{\mathscr{K}^a(S')}(w1_Q)}\sigma \\
    &=\abs{\ave{\sha_{\mathscr{K}^a(S)}(w1_Q)}_{S'}}\int\abs{\sha_{\mathscr{K}^a(S')}(w1_Q)}\sigma
\end{align*}
The next task is obtaining useful bounds for the integral on the right.

\subsection{John--Nirenberg-type estimates}
The goal is to estimate the size of the set where
\begin{equation*}
   \abs{\sha_{\mathscr{K}^a(S)}(w1_Q)}>t, 
\end{equation*}
both with respect to the Lebesgue and $\sigma$ measures. The available information is the weak-type $L^1$ bound for the dyadic shifts, and the Lebesgue measure estimate could be deduced directly from this by a usual John--Nirenberg-type argument. However, in order to smoothen the passage to the $\sigma$ measure estimate, it is useful to first consider the shifts restricted to the collections $\mathscr{K}^a_b(S)$, where the two measures are comparable.

\begin{lemma}
For a good, finite, bounded dyadic shift $\sha$ with parameters $(u,v)$, the following estimates hold when $\nu$ is either the Lebesgue or the $\sigma$ measure:
\begin{equation*}
  \nu\Big(\Big\{\abs{\sha_{\mathscr{K}^a_b(S)}(w1_Q)}>u2^{-b}\frac{w(S\cap Q)}{\abs{S}}\cdot t\Big\}\Big)
  \lesssim e^{-ct}\nu(S),\qquad t\geq 0,
\end{equation*}
where $c>0$ is a constant.
\end{lemma}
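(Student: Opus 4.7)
The plan is to carry out a John--Nirenberg-type iteration built on the weak $L^1$ bound for $\sha$ from the previous Proposition (which has constant $O(u)$). Writing $F := \sha_{\mathscr{K}^a_b(S)}(w 1_Q)$, I would first observe that since each $A_K$ has kernel supported in $K \subseteq S$, $F$ is supported on $E := \bigcup_{K \in \mathscr{K}^a_b(S)} K \subseteq S$, and $\sha_{\mathscr{K}^a_b(S)}(w 1_Q) = \sha_{\mathscr{K}^a_b(S)}(w 1_{E \cap Q})$. Pigeonhole step~(1) forces $\ell(K') \leq 2^{-v-1}\ell(K)$ whenever $K \supsetneq K'$ both lie in $\mathscr{K}^a_b(S)$, so $A_K(w 1_Q)$ is constant on such $K'$, while $\Norm{A_K(w 1_Q)}{\infty} \lesssim \frac{w(K\cap Q)}{\abs{K}} \eqsim 2^{-b}\frac{w(S\cap Q)}{\abs{S}}$. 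Summing over the disjoint maximal cubes of $\mathscr{K}^a_b(S)$ (of total Lebesgue measure at most $\abs{S}$) would yield the crucial input-mass estimate
\begin{equation*}
  \Norm{w 1_{E \cap Q}}{L^1}\lesssim 2^{-b}w(S\cap Q).
\end{equation*}

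For the base case, I would set $\lambda_0 := Cu\cdot 2^{-b}\frac{w(S \cap Q)}{\abs{S}}$ with $C$ large and apply the weak $L^1$ bound to the equivalent input $w1_{E\cap Q}$, obtaining
\begin{equation*}
  \abs{\{\abs{F}>\lambda_0\}}\lesssim\frac{u\,\Norm{w 1_{E\cap Q}}{L^1}}{\lambda_0}\lesssim\frac{\abs{S}}{C}\leq\tfrac{1}{2}\abs{S}.
\end{equation*}
It is precisely here that the factor $2^{-b}$ in $\lambda_0$ is justified: the effective input carries only $\lesssim 2^{-b}w(S\cap Q)$ of $L^1$-mass, and without this reduction the weak-$L^1$ bound alone would overshoot by a fatal $2^b$.

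For the iteration, I would take $\{P_i\}$ to be the maximal dyadic cubes on which the upper sum $\sum_{K \in \mathscr{K}^a_b(S),\,K\supsetneq P_i} A_K(w 1_Q)$ exceeds $C\lambda_0$ in absolute value. By the scale separation this upper sum is constant on each $P_i$, so one may write $F|_{P_i} = c_{P_i}+F_{P_i}$ with $\abs{c_{P_i}}\leq C\lambda_0$ and $F_{P_i} := \sha_{\{K\in\mathscr{K}^a_b(S):\,K\subseteq P_i\}}(w 1_Q)$ a self-similar sub-shift. Applying the base case to $F_{P_i}$ on $P_i$ would give $\abs{\{\abs{F_{P_i}}>\lambda_0\}\cap P_i}\leq\abs{P_i}/2$, and iterating $k$ times would produce $\abs{\{\abs{F}>k\lambda_0\}}\leq 2^{-k}\abs{S}$, i.e., the claimed exponential bound for $\nu = dx$.

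The $\sigma$-case I would handle by the same iteration with $\sigma$ in place of Lebesgue, provided the stopping cubes are chosen within $\mathscr{K}^a_b(S)$ (or finely resolved relative to it) so that $\sigma(P_i)\eqsim\tau^a_b(S)\abs{P_i}$, with the base case replaced by its $\sigma$-version. The main obstacle I anticipate is precisely this Lebesgue-to-$\sigma$ conversion: the averaged identity $\sigma \eqsim \tau^a_b(S)\,dx$ on $\mathscr{K}^a_b(S)$-cubes does not transfer pointwise to arbitrary subsets, so I would use that $F_{P_i}$ is constant on the Haar-induced scale $2^{-v-1}\ell(K_{\min}(x))$ to decompose each level set $\{\abs{F_{P_i}}>\lambda_0\}$ into maximal $\mathscr{K}^a_b(S)$-subcubes on which the averaged $\sigma$-density applies cleanly, costing at most an absorbable constant in the exponential rate $c$.
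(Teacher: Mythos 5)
Your proposal follows the same overall strategy as the paper: a John--Nirenberg iteration driven by the weak $L^1$ bound (with constant $O(u)$) from the preceding Proposition, with the crucial factor $2^{-b}$ correctly traced to the restriction of the input $w$-mass on $\mathscr{K}^a_b(S)$-cubes. The base case is fine, and your recursion on stopping cubes $P_i$ is a cosmetic variant of the paper's recursive inequality $|\{\abs{\sha_{\mathscr{K}^a_b(S)}(w1_Q)}>n\lambda\}|\leq\tfrac12|\{\abs{\sha_{\mathscr{K}^a_b(S)}(w1_Q)}>(n-1)\lambda\}|$. So the Lebesgue half of the claim is essentially right.

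The gap is in the $\sigma$-conversion. You propose to ``decompose each level set $\{\abs{F_{P_i}}>\lambda_0\}$ into maximal $\mathscr{K}^a_b(S)$-subcubes on which the averaged $\sigma$-density applies cleanly,'' but the union of all $\mathscr{K}^a_b(S)$-subcubes of $P_i$ has Lebesgue measure up to $\abs{P_i}$, so this step by itself yields only $\sigma(\{\abs{F_{P_i}}>\lambda_0\})\lesssim\tau^a_b(S)\abs{P_i}\eqsim\sigma(P_i)$ --- the trivial bound, which destroys the factor $\tfrac12$ that the iteration needs. The paper's resolution is different in a way that matters: one covers $\{\abs{F}>n\lambda\}$ by \emph{disjoint} stopping cubes $L\in\mathscr{L}\subseteq\mathscr{K}^a_b(S)$ whose total \emph{Lebesgue} measure is already under control from the Lebesgue iteration, namely $\sum_{L\in\mathscr{L}}\abs{L}\lesssim|\{\abs{F}>(n-1)\lambda\}|\lesssim 2^{-n}\sum_{M\in\mathscr{M}}\abs{M}$, where $\mathscr{M}$ is the family of maximal $\mathscr{K}^a_b(S)$-cubes (the point being that a fixed fraction, at least $\tfrac23$, of each $L$ lies in the lower level set). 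The $\sigma$-estimate is then obtained not by re-running the iteration in $\sigma$, but by translating the Lebesgue decay at the cube level in both directions: $\sigma(\{\abs{F}>n\lambda\})\leq\sum_L\sigma(L)\lesssim\tau^a_b(S)\sum_L\abs{L}\lesssim\tau^a_b(S)2^{-n}\sum_M\abs{M}\lesssim 2^{-n}\sum_M\sigma(M)\leq 2^{-n}\sigma(S)$. You would need to reorganize your $\sigma$-half along these lines for it to close.
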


\begin{proof}
Let $\lambda:=Cu2^{-b} w(S\cap Q)/\abs{S}$, where $C$ is a large constant, and $n\in\Z_+$. Let $x\in\R^N$ be a point where
\begin{equation}\label{eq:>nLambda}
  \abs{\sha_{\mathscr{K}^a_b(S)}(w1_Q)(x)}>n\lambda.
\end{equation}
Then for all small enough $L\in\mathscr{K}^a_b(S)$ with $L\owns x$, there holds
\begin{equation*}
  \Babs{\sum_{\substack{K\in\mathscr{K}^a_b(S) \\ K\supseteq L}}A_K(w1_Q)(x)}>n\lambda.
\end{equation*}
Since $\displaystyle\sum_{\substack{K\in\mathscr{K}^a_b(S) \\ K\supset L}}A_K(w1_Q)(x)$ is constant on $L$, and
\begin{equation}\label{eq:ALwQ}
  \Norm{A_L(w1_Q)}{\infty}\lesssim\frac{w(L\cap Q)}{\abs{L}}\lesssim 2^{-b}\frac{w(S\cap Q)}{\abs{S}},
\end{equation}
it follows that
\begin{equation}\label{eq:>n-1/3}
   \Babs{\sum_{\substack{K\in\mathscr{K}^a_b(S) \\ K\supset L}}A_K(w1_Q)}>(n-\tfrac{1}{3})\lambda\qquad\text{on }L.
\end{equation}
Let $\mathscr{L}\subseteq\mathscr{K}^a_b(S)$ be the collection of maximal cubes with the above property. Thus all $L\in\mathscr{L}$ are disjoint, and all $x$ with \eqref{eq:>nLambda} belong to some $L$. By maximality of $L$, the minimal $L^*\in\mathscr{K}^a_b(S)$ with $L^*\supset L$ satisfies
\begin{equation*}
   \Babs{\sum_{\substack{K\in\mathscr{K}^a_b(S) \\ K\supset L^*}}A_K(w1_Q)}\leq(n-\tfrac{1}{3})\lambda\qquad\text{on }L^*.
\end{equation*}
By an estimate similar to \eqref{eq:ALwQ}, with $L^*$ in place of $L$, it follows that
\begin{equation*}
  \Babs{\sum_{\substack{K\in\mathscr{K}^a_b(S) \\ K\supset L}}A_K(w1_Q)}\leq (n-\tfrac{2}{3})\lambda\qquad\text{on }L.
\end{equation*}
Thus, if $x$ satisfies \eqref{eq:>nLambda} and $x\in L\in\mathscr{L}$, then necessarily
\begin{equation*}
  \abs{\sha_{\{K\in\mathscr{K}^a_b(S); K\subseteq L\}}(w1_{Q\cap L})(x)}=
  \Babs{\sum_{\substack{K\in\mathscr{K}^a_b(S) \\ K\subseteq L}}A_K(w1_Q)(x)}>\tfrac13\lambda.
\end{equation*}
Using the weak-type $L^1$ estimate, which is uniform over all bounded dyadic shifts with parameters $(u,v)$, it follows that
\begin{align*}
  \Babs{\Big\{\Babs{\sum_{\substack{K\in\mathscr{K}^a_b(S) \\ K\subseteq L}}A_K(w1_Q)(x)}>\tfrac13\lambda\Big\}}
  &\leq \frac{Cu}{\lambda}w(L\cap Q) \\
  &\leq\frac{Cu}{\lambda}2^{-b}\frac{w(S\cap Q)}{\abs{S}}\abs{L} \leq \tfrac13\abs{L},
\end{align*}
provided that the constant in the definition of $L$ was chosen large enough. Recalling \eqref{eq:>n-1/3}, there holds
\begin{align*}
  \Babs{\sum_{K\in\mathscr{K}^a_b(S)}A_K(w1_Q)}
  &\geq\Babs{\sum_{\substack{K\in\mathscr{K}^a_b(S) \\ K\supset L}}A_K(w1_Q)}-\Babs{\sum_{\substack{K\in\mathscr{K}^a_b(S) \\ K\subseteq L}}A_K(w1_Q)} \\
  &>(n-\tfrac13)\lambda-\tfrac13\lambda=(n-\tfrac23)\lambda\quad\text{on }\tilde{L}\subset L\text{ with }\abs{\tilde{L}}\geq\tfrac23\abs{L}.
\end{align*}
Thus
\begin{align*}
  \abs{\{\abs{\sha_{\mathscr{K}^a_b(S)}(w1_Q)}>n\lambda\}}
  &\leq\sum_{L\in\mathscr{L}}\abs{L\cap \{\abs{\sha_{\mathscr{K}^a_b(S)}(w1_Q)}>n\lambda\}} \\
  &\leq\sum_{L\in\mathscr{L}}\abs{\{\abs{\sha_{\{K\in\mathscr{K}^a_b(S):K\subseteq L\}}(w1_Q)}>\tfrac13\lambda\}} \\
  &\leq\sum_{L\in\mathscr{L}}\tfrac13\abs{L}\leq\sum_{L\in\mathscr{L}}\tfrac13\cdot\tfrac 32\abs{\tilde{L}} \\
  &\leq\tfrac12\sum_{L\in\mathscr{L}} \abs{L\cap\{ \abs{\sha_{\mathscr{K}^a_b(S)}(w1_Q)}>(n-1)\lambda\}} \\
  &\leq\tfrac12\abs{\{ \abs{\sha_{\mathscr{K}^a_b(S)}(w1_Q)}>(n-1)\lambda\}}.
\end{align*}
By induction it follows that
\begin{align*}
  \abs{\{\abs{\sha_{\mathscr{K}^a_b(S)}(w1_Q)}>n\lambda\}}
  &\leq 2^{-n}\abs{\{ \abs{\sha_{\mathscr{K}^a_b(S)}(w1_Q)}>0\}} \\
  &\leq 2^{-n}\sum_{M\in\mathscr{M}}\abs{M}\leq 2^{-n}\abs{S},
\end{align*}
where $\mathscr{M}$ is the collection of maximal cubes in $\mathscr{K}^a_b(S)$.

To deduce the corresponding estimate for the $\sigma$ measure, selected intermediate steps of the above computation, as well as the definition of $\mathscr{K}^a_b(S)$, will be exploited:
\begin{align*}
  \sigma(\{\abs{\sha_{\mathscr{K}^a_b(S)}(w1_Q)}>n\lambda\})
  &\leq\sum_{L\in\mathscr{L}}\sigma(L) 
    \lesssim\sum_{L\in\mathscr{L}}\tau^a_b(S)\abs{L} \\
  &\lesssim\tau^a_b(S)\abs{\{ \abs{\sha_{\mathscr{K}^a_b(S)}(w1_Q)}>(n-1)\lambda\}} \\
  &\lesssim \tau^a_b(S)2^{-n}\sum_{M\in\mathscr{M}}\abs{M} \\
  &\lesssim 2^{-n}\sum_{M\in\mathscr{M}}\sigma(M)\leq 2^{-n}\sigma(S).\qedhere
\end{align*}
\end{proof}

It is an immediate consequence that a similar estimate holds for the bigger collections $\displaystyle\mathscr{K}^a(S)=\bigcup_{b=0}^{\infty}\mathscr{K}^a_b(S)$; indeed
\begin{align*}
    \nu\Big(\Big\{ &\abs{\sha_{\mathscr{K}^a(S)}(w1_Q)}>u\frac{w(S\cap Q)}{\abs{S}}\cdot t\Big\}\Big) \\
    &\leq\sum_{b=0}^{\infty}\nu\Big(\Big\{\abs{\sha_{\mathscr{K}^a_b(S)}(w1_Q)}>u2^{-b}\frac{w(S\cap Q)}{\abs{S}}\cdot c2^{b/2} t\Big\}\Big) \\
    &\lesssim\sum_{b=0}^{\infty} e^{-c2^{b/2}t}\nu(S)\lesssim \sum_{b=0}^{\infty} e^{-c2^{b/2}}e^{-ct}\nu(S)\lesssim e^{-ct}\nu(S),
\end{align*}
where the computation is valid at least for $t\geq 2$, and the conclusion is trivial otherwise. The final conclusion, for both measures, is that
\begin{equation}\label{eq:LpBound}
  \int\abs{\sha_{\mathscr{K}^a(S)}(w1_Q)}^p\ud\nu
  \lesssim\Big(u\frac{w(S\cap Q)}{\abs{S}}\Big)^p\nu(S),\qquad p\in[1,\infty).
\end{equation}

\subsection{Conclusion of the proof}
Returning to the estimation of $\Norm{\sha_{\mathscr{K}}(w1_Q)}{L^2(\sigma)}$, it has so far been shown that
\begin{align*}
  &\Norm{\sha_{\mathscr{K}}(w1_Q)}{L^2(\sigma)} \\
  &\leq \sum_{2^a\leq\Norm{w}{A_2}}\Big(\sum_{S\in\mathscr{S}^a}\int\abs{\sha_{\mathscr{K}^a(S)}(w1_Q)}^2\sigma \\
  &\phantom{\leq \sum_{a:2^a\leq\Norm{w}{A_2}}\Big(}+2\sum_{S\in\mathscr{S}^a}\sum_{\substack{S'\in\mathscr{S}^a\\ S'\subset S}}
         \abs{\ave{\sha_{\mathscr{K}^a(S)}(w1_Q)}_{S'}}\int\abs{\sha_{\mathscr{K}^a(S')}(w1_Q)}\sigma\Big)^{1/2}.
\end{align*}
Substituting the estimate \eqref{eq:LpBound} with $\nu=\sigma$ and $p=1,2$, this continues with
\begin{align*}
  &\lesssim \sum_{2^a\leq\Norm{w}{A_2}}\Big(\sum_{S\in\mathscr{S}^a}\Big(u\frac{w(S\cap Q)}{\abs{S}}\Big)^2\sigma(S) \\
  &\phantom{\leq \sum_{a:2^a\leq\Norm{w}{A_2}}\Big(}+\sum_{S\in\mathscr{S}^a}\sum_{\substack{S'\in\mathscr{S}^a\\ S'\subset S}}
         \abs{\ave{\sha_{\mathscr{K}^a(S)}(w1_Q)}_{S'}}\Big(u\frac{w(S'\cap Q)}{\abs{S'}}\Big)\sigma(S')\Big)^{1/2},
\end{align*}
and recalling the freezing of the local $A_2$ characteric in the definition of $\mathscr{K}^a$,
\begin{align*}         
  &\lesssim \sum_{2^a\leq\Norm{w}{A_2}} 2^{a/2}\Big(u^2\sum_{S\in\mathscr{S}^a} w(S\cap Q)
    +u\sum_{S\in\mathscr{S}^a}\sum_{\substack{S'\in\mathscr{S}^a\\ S'\subset S}}
         \abs{\ave{\sha_{\mathscr{K}^a(S)}(w1_Q)}_{S'}}\,\abs{S'}\Big)^{1/2}.  
\end{align*}

Concentrating for the moment on the last term,
\begin{align*}
   \sum_{\substack{S'\in\mathscr{S}^a\\ S'\subset S}} &\abs{\ave{\sha_{\mathscr{K}^a(S)}(w1_Q)}_{S'}}\,\abs{S'}
   \leq \sum_{\substack{S'\in\mathscr{S}^a\\ S'\subset S}}\int_{S'}\abs{\sha_{\mathscr{K}^a(S)}(w1_Q)}\ud x \\
   &=\int\Big(\sum_{\substack{S'\in\mathscr{S}^a\\ S'\subset S}}1_{S'}\Big)\abs{\sha_{\mathscr{K}^a(S)}(w1_Q)}\ud x \\
   &\leq\BNorm{\sum_{\substack{S'\in\mathscr{S}^a\\ S'\subset S}}1_{S'}}{L^2}\Norm{\sha_{\mathscr{K}^a(S)}(w1_Q)}{L^2}.
\end{align*}
The first factor is bounded by $\abs{S}^{1/2}$, as one easily checks from the construction of the stopping cubes: those $S'\subset S$ of the first generation are disjoint, and
\begin{equation*}
  \sum_{S'}\abs{S'}\leq\sum_{S'}\frac{1}{4}w(S'\cap Q)\frac{\abs{S}}{w(S\cap Q)}
  \leq\frac14 w(S\cap Q)\frac{\abs{S}}{w(S\cap Q)}=\frac14\abs{S};
\end{equation*}
one simply repeats this for the consecutive generations and sums up a geometric series. The second factor may be estimated by \eqref{eq:LpBound} with the Lebesgue measure and $p=2$, to the result that
\begin{equation*}
   \Norm{\sha_{\mathscr{K}^a(S)}(w1_Q)}{L^2}\lesssim\Big(u\frac{w(S\cap Q)}{\abs{S}}\Big)\abs{S}^{1/2}.
\end{equation*}
Thus, altogether
\begin{equation*}
  \sum_{\substack{S'\in\mathscr{S}^a\\ S'\subset S}} \abs{\ave{\sha_{\mathscr{K}^a(S)}(w1_Q)}_{S'}}\,\abs{S'}
  \lesssim u\cdot w(S\cap Q),
\end{equation*}
and then
\begin{equation*}
  \Norm{\sha_{\mathscr{K}}(w1_Q)}{L^2(\sigma)}
  \lesssim u\sum_{2^a\leq\Norm{w}{A_2}}2^{a/2} \Big(\sum_{S\in\mathscr{S}^a}w(S\cap Q)\Big)^{1/2}.
\end{equation*}
The proof is completed by the following lemma, for then
\begin{align*}
   \Norm{\sha_{\mathscr{K}}(w1_Q)}{L^2(\sigma)}
  &\lesssim u\sum_{2^a\leq\Norm{w}{A_2}}2^{a/2}\Big(2^{\max(u,v)\gamma N}\Norm{w}{A_2}w(Q)\Big)^{1/2} \\
  &\lesssim u2^{\max(u,v)\gamma N/2}\Norm{w}{A_2}w(Q)^{1/2};
\end{align*}
recall that the final estimate will also involve the factor $v$ resulting from summing up the $v+1$ subcollections in the first step of the pigeonholing.

\begin{lemma}\label{lem:sumStopping}
\begin{equation*}
  \sum_{S\in\mathscr{S}^a}w(S\cap Q)\lesssim 2^{\max(u,v)\gamma N}\Norm{w}{A_2}w(Q).
\end{equation*}
\end{lemma}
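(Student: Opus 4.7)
\textit{Plan.} My plan is to reduce the sum to a geometric quantity via the sparsity of $\mathscr{S}^a$ and a cube-wise $A_2$/Cauchy--Schwarz estimate, and then to handle the final localization to $Q$ by exploiting the goodness built into $\mathscr{K}$.

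First I extract sparsity from the stopping rule. For $S\in\mathscr{S}^a$ with stopping children $\{S'_i\}\subset\mathscr{S}^a$, the inequality $w(S'_i\cap Q)/\abs{S'_i}>4\,w(S\cap Q)/\abs{S}$ gives $\abs{S'_i}<\tfrac14\abs{S}\cdot w(S'_i\cap Q)/w(S\cap Q)$; summing in $i$ (the children are disjoint in $S$, so $\sum_iw(S'_i\cap Q)\leq w(S\cap Q)$) yields $\sum_i\abs{S'_i}\leq\abs{S}/4$. Thus $E(S):=S\setminus\bigcup_iS'_i$ satisfies $\abs{E(S)}\geq\tfrac34\abs{S}$, and the family $\{E(S)\}_{S\in\mathscr{S}^a}$ is pairwise disjoint.

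Next, on each $S$ I combine the $A_2$ bound $w(S)\sigma(S)\leq\Norm{w}{A_2}\abs{S}^2$ with the Cauchy--Schwarz inequality $\abs{E(S)}^2\leq w(E(S))\sigma(E(S))$ (coming from $w\sigma\equiv 1$), together with $\sigma(E(S))\leq\sigma(S)$ and $\abs{S}\leq\tfrac43\abs{E(S)}$, to obtain
\begin{equation*}
w(S\cap Q)\leq w(S)\leq \Norm{w}{A_2}\frac{\abs{S}^2}{\sigma(S)}\leq\Big(\tfrac{4}{3}\Big)^{\!2}\Norm{w}{A_2}\,w(E(S)).
\end{equation*}
Summing over $\mathscr{S}^a$ and invoking the disjointness of the $\{E(S)\}$ leaves
\begin{equation*}
\sum_{S\in\mathscr{S}^a}w(S\cap Q)\lesssim\Norm{w}{A_2}\,w\Big(\bigcup_{S\in\mathscr{S}^a}E(S)\Big).
\end{equation*}

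It then remains --- and this is the main obstacle --- to show $w(\bigcup_{S}E(S))\lesssim 2^{\max(u,v)\gamma N}\,w(Q)$. The crude containment $\bigcup_{S}E(S)\subseteq 3Q$ (from $\ell(S)<\ell(Q)$ and $S\cap Q\neq\emptyset$), combined with $A_2$ doubling $w(3Q)\lesssim\Norm{w}{A_2}w(Q)$, would produce an extra factor of $\Norm{w}{A_2}$ and yield $\Norm{w}{A_2}^2 w(Q)$ --- one power too many. To remove it one must exploit the goodness $\hat{S}\cap Q\neq\emptyset$ forced by membership in $\mathscr{K}$: the inner core $\hat{S}$, at distance $\geq 2^{-\max(u,v)\gamma}\ell(S)$ from $\partial S$, has to meet $Q$, which bars good stopping cubes from crowding freely along $\partial Q$. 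My plan is to split $\mathscr{S}^a$ into the subfamily of cubes fully contained in $Q$ (whose contribution is bounded by $\Norm{w}{A_2}\,w(Q)$ directly from the disjointness of the $E(S)$'s inside $Q$) and the subfamily of boundary-crossing cubes; for the latter I would pigeonhole by scale and convert the resulting non-dyadic overhang, using the explicit geometric margin $2^{-\max(u,v)\gamma}\ell(S)$ in the $\hat{\cdot}$ condition, into the factor $2^{\max(u,v)\gamma N}$. Carrying out this accounting sharply enough that no additional power of $\Norm{w}{A_2}$ sneaks in is the crux of the argument.
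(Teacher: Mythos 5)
Your sparsity observation is correct and even appears implicitly in the paper (the paper bounds $\sum_{S'\subset S}\abs{S'}\leq\abs{S}/4$ at a different point in the argument). The gap is in the passage $w(S\cap Q)\leq w(S)\lesssim\Norm{w}{A_2}w(E(S))$: by discarding the intersection with $Q$ at this early stage, you replace the quantity you need to bound by something that can carry substantial $w$-mass \emph{outside} $Q$. The goodness condition $\hat{S}\cap Q\neq\varnothing$ only guarantees $\abs{S\cap Q}\geq 2^{-\max(u,v)\gamma N}\abs{S}$, i.e., a possibly very small fraction of $S$ lies in $Q$ when $\max(u,v)$ is large; it does \emph{not} prevent $E(S)$ (which fills three quarters of $S$) from lying almost entirely in $3Q\setminus Q$. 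Disjointness of the $E(S)$'s then gives at best $w(\bigcup_S E(S))\leq w(3Q)\lesssim\Norm{w}{A_2}\,w(Q)$, and as you note this produces the unwanted $\Norm{w}{A_2}^2$. Your proposed fix --- splitting into interior and boundary-crossing cubes and pigeonholing by scale --- does not close this gap: for boundary-crossing cubes there is no geometric bound on $w(S\setminus Q)$ that avoids the doubling constant.

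The paper's proof keeps $w(S\cap Q)$ intact and uses goodness in the opposite direction, to get a \emph{lower} bound on $\abs{S\cap Q}$: writing $w(S\cap Q)\leq 2^{\max(u,v)\gamma N}\frac{w(S\cap Q)}{\abs{S}}\abs{S\cap Q}$ converts the sum into $2^{\max(u,v)\gamma N}\int_Q\sum_S\frac{w(S\cap Q)}{\abs{S}}1_S\ud x$. The stopping construction makes the integrand grow geometrically in the chain $S\ni x$, so it is pointwise $\lesssim M(w1_Q)(x)$, and then Cauchy--Schwarz together with Buckley's sharp bound $\Norm{M}{\bddlin(L^2(\sigma))}\lesssim\Norm{\sigma}{A_2}=\Norm{w}{A_2}$ gives exactly one power of $\Norm{w}{A_2}$. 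Note that the crucial extra tool that your approach is missing is the maximal-function estimate, which is what allows working with the locally defined quantity $w(S\cap Q)/\abs{S}$ rather than with $w(S)$. I recommend abandoning the sparse-set reduction and instead carrying through this maximal-function argument.
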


\begin{proof}
Recall the notation $\hat{K}$ from the beginning of this section, right before \eqref{eq:mainToProve}.
Every $K\in\mathscr{K}$ satisfies $\hat{K}\cap Q\neq\varnothing$ and $\ell(K)<\ell(Q)$, which imply that $K\cap Q$ must contain a cube of sidelength $2^{-\max(u,v)\gamma}\ell(K)$, thus of volume $2^{-\max(u,v)\gamma N}\abs{K}$. This holds in particular for every $S\in\mathscr{S}^a\subseteq\mathscr{K}$. Hence
\begin{align*}
  \sum_{S\in\mathscr{S}^a}w(S\cap Q) &\leq 2^{\max(u,v)\gamma N}\sum_{S\in\mathscr{S}^a}\frac{w(S\cap Q)}{\abs{S}}\abs{S\cap Q} \\
  &=2^{\max(u,v)\gamma N}\int_Q \sum_{S\in\mathscr{S}^a}\frac{w(S\cap Q)}{\abs{S}}1_S(x)\ud x.
\end{align*}
For a fixed point $x$, the construction of the stopping cubes ensures that the ratio $w(S\cap Q)/\abs{S}$ along $S\owns x$ increases at least geometrically, and hence their sum is dominated by the maximal value, which in turn is dominated by $M(w1_Q)(x)$. Thus
\begin{align*}
   \int_Q \sum_{S\in\mathscr{S}^a}\frac{w(S\cap Q)}{\abs{S}}1_S(x)\ud x
   &\lesssim\int_Q M(w1_Q)\ud x
   \leq\Norm{M(w1_Q)}{L^2(\sigma)}\Norm{1_Q}{L^2(w)} \\
   &\lesssim\Norm{\sigma}{A_2}\Norm{w1_Q}{L^2(\sigma)}w(Q)^{1/2}=\Norm{w}{A_2}w(Q)
\end{align*}
by an application of Buckley's estimate \eqref{eq:Buckley}.
\end{proof}

Note that if $Q\in\mathscr{D}$, then all $K\in\mathscr{K}$ satisfy $K\subseteq Q$, hence $K\cap Q=K$, and the introduction of the exponential factor, as well as the use of the goodness of the shift at this point, is unnecessary.

\section{Discussion}

\subsection{A shorter proof of the $A_2$ conjecture?}
At the present, a self-contained proof of the $A_2$ conjecture would consist of the almost 40 pages of P\'erez, Treil and Volberg's reduction to the weak-type estimate \cite{PTV}, combined with the present argument to provide this last missing information. It is perhaps interesting that both these steps go through a $T(1)$ theorem for a Calder\'on--Zygmund operator, and using a Haar wavelet basis; however, one adapted to the measures $w$ and $\sigma$ in P\'erez--Treil--Volberg's part \cite{PTV}, and the standard one in the present contribution.

While it gives the desired result, this combination might be a bit of overshooting: since the present argument already reduces things to the dyadic shift operators, it should philosophically be enough to use a weight-adapted $T(1)$-theorem for these shifts, rather than for general Calder\'on--Zygmund operators. And for dyadic operators, it should ideally be enough to verify the weighted testing condition for dyadic cubes only, which would somewhat simplify the preceding analysis. Indeed, a result of this flavour is provided by Nazarov--Treil--Volberg's two-weight inequality for dyadic shift operators \cite{NTV:2weightHaar} (which lies behind Lacey--Petermichl--Reguera's result \cite{LPR}). But in order to apply it to the desired conclusion, one would need to keep track of the dependence of their estimate on the shift parameters, to ensure the required summability in the end, whereas the P\'erez--Treil--Volberg result \cite{PTV} may be directly applied as a black box.

It would also be interesting if the Lerner's formula -based Cruz-Uribe--Martell--P\'erez approach \cite{CMP} to the Lacey--Petermichl--Reguera estimate \cite{LPR} could be improved so as to have summable dependence on the shift parameters. 

\subsection{Possible extensions}
The representation of a Calder\'on--Zygmung operator as an average of good dyadic shifts is an identity, which has no specific connection to $A_2$ weights, and may be useful for proving other bounds as well. In particular, it is likely that the same proof strategy is also applicable to providing sharp weighted weak-type $L^p$ bounds for general Calder\'on--Zygmund operators, in a similar way as the the Lacey--Petermichl--Reguera argument was extended to weak-type $L^p$ bounds for dyadic shifts \cite{HLRV} and smooth Calder\'on--Zygmund operators \cite{HLRSUV} by Lacey et al. This would involve verifying the weak-type testing condition of Lacey--Sawyer--Uriarte-Tuero \cite{LSU}, which is very similar to the P\'erez--Treil--Volberg testing condition \cite{PTV} checked in this paper. The main difference is that the Lacey--Sawyer--Uriarte-Tuero condition requires the estimation of the maximal truncations of $T$, rather than just the operator itself; on the other hand, the conclusions of their theorem are then valid for the maximal truncations as well.

P\'erez, Treil and Volberg assert that their result extends to Calder\'on--Zygmund operators on spaces of homogeneous type \cite[Section~12]{PTV}. It is likely that the present argument will do so as well. In particular, the dyadic cubes in this generality have already been constructed by Christ \cite{Christ}, and the required randomisation of this construction was recently carried out by Martikainen and the author \cite{HM}. The present arguments also made use of some specific symmetries of the Euclidean space, especially the fact that the probability of a cube being good is constant. A trick to ensure this even in a metric space has been presented by Martikainen \cite{Martikainen}. One would still need to check whether the computation of the conditional probabilities, which here employed the explicit form of the randomisation in terms of the binary variables $\beta_j$, is compatible with the abstract randomisation procedure in a metric space. The actual estimates for the shifts above mainly relied on the abstract dyadic structure, and would probably extend reasonably straightforwardly.

\bibliography{weighted}
\bibliographystyle{plain}

\end{document}